\newtheorem{theorem}{Theorem}[section]
\newtheorem{proposition}[theorem]{Proposition}
\numberwithin{equation}{section}
\theoremstyle{definition}
\newtheorem{example}[theorem]{Example}
\newtheorem{remark}[theorem]{Remark}
 \def\R{\mathbb{R}}
 \def\Z{\mathbb{Z}}
\def\leq{\leqslant }
\def\geq{\geqslant}
\def\SU{\mathrm{SU}}
\def\SO{\mathrm{SO}}
\def\SL{\mathrm{SL}}
\def\U{\mathrm{U}}
\def\C{\mathbb{C}}
\def\hm{\mathrm{Hom}}
\def\X{\mathfrak{X}}
\def\Y{\mathfrak{Y}}
\def\tr{\mathrm{tr}}
\DeclareSymbolFont{bbold}{U}{bbold}{m}{n}
\DeclareSymbolFontAlphabet{\mathbbold}{bbold}
\def\bbone{\mathbbold{1}}
\begin{document}

\title[Non-ergodicity on character varieties]{Non-ergodicity on $\SU(2)$ and $\SU(3)$ character varieties of the once-punctured torus}

\author[G. Forni]{Giovanni Forni}
\address{Department of Mathematics, University of Maryland, College Park, MD 20742, USA}
\email{gforni@umd.edu}

\author[W. Goldman]{William M. Goldman}
\address{Department of Mathematics, University of Maryland, College Park, MD 20742, USA}
\email{wmg@math.umd.edu}

\author[S. Lawton]{Sean Lawton}
\address{Department of Mathematical Sciences, George Mason University, 4400 University Drive, Fairfax, Virginia 22030, USA}
\email{slawton3@gmu.edu}

\author[C. Matheus]{Carlos Matheus}
\address{CMLS, CNRS, \'Ecole polytechnique, Institut Polytechnique de Paris, 91128 Palaiseau Cedex, France.}
\email{carlos.matheus@math.cnrs.fr.}

\subjclass[2010]{Primary 14M35, 22D40, 70H08; Secondary 53D30, 37A25}


\date{\today}
\keywords{KAM Theory, Non-ergodicity, Character Variety}

\begin{abstract}
Utilizing KAM theory, we show that there are certain levels in relative $\SU(2)$ and $\SU(3)$ character varieties of the once-punctured torus where the action of a single hyperbolic element is not ergodic.
\end{abstract}
\maketitle


\tableofcontents

\section{Introduction}

In 1997 Goldman proved that the (pure) mapping class group of a closed orientable surface $\Sigma$ acts ergodically on the moduli space of flat principal $G$-bundles over $\Sigma$ when $G$ is a compact connected Lie group whose simple factors are rank 1 \cite{Gold6}.  Goldman conjectured his theorem generalized to parabolic $G$-bundles over compact orientable surfaces $\Sigma_{n,g}$ of genus $g\geq 0$ with $n\geq 0$ punctures (assuming $n\geq 4$ if $g=0$) for any compact connected Lie group $G$.  This conjecture was established in \cite{PE1,PE2} when $g\geq 2$ and $n\geq 0$. The genus 0 and 1 cases are largely open\footnote{When $g=0$ the conjecture is trivially true for $n=0,1,2$ since the moduli spaces are points, and false for $n=3$  since the mapping class group is trivial unless $G$ has only rank 1 simple factors when again the moduli space is a point.}  when $G$ has simple factors of rank {\it greater} than 1.  However, when $G=\SU(3)$ and $g=1=n$ the conjecture was recently shown true \cite{GLX}.  All existing proofs more or less show that the Dehn twists are covered by Hamiltonian flows which generate an ergodic group action.  Recent work has been announced that the much smaller Torelli subgroup is sufficient to establish ergodicity in the closed surface cases \cite{Bo}.

It is natural to ask (as with the action of an irrational rotation on a torus) if a single hyperbolic element in the mapping class group is sufficient to induce ergodicity.    In the case of $G=\SU(2)$, it was shown in Brown's thesis \cite{Bro} that the answer is no using KAM theory.  There is a small gap in his proof that was recently observed by C. Matheus.  It is the purpose of this article to fill this gap and further establish this phenomena also occurs in the case of $G=\SU(3)$ and $g=1=n$ giving the opportunity to utilize higher dimensional KAM theory.

In addition to giving a complete account of Brown's theorem, we prove:

\begin{theorem}[Theorem \ref{main}]Let $\mathcal{MCG}$ be the mapping class group of the once-punctured torus $\Sigma_{1,1}$ and let $\X_\ell$ be the moduli space of flat parabolic $\SU(3)$-bundles over $\Sigma_{1,1}$ with parabolic data $\ell$. Then, there exists a hyperbolic element $M\in\mathcal{MCG}$ and a choice of parabolic structure $\ell$ such that the action of $M$ on $\X_\ell$ is non-ergodic. 
\end{theorem}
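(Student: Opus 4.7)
The strategy is to apply KAM theory to the action of a hyperbolic $M\in\mathcal{MCG}(\Sigma_{1,1})\cong\SL(2,\Z)$ near a carefully chosen fixed point $\rho_0$ of $M$ in $\X_\ell$. In outline: construct such a $\rho_0$; check that the linearization $dM_{\rho_0}$ is elliptic and Diophantine-non-resonant; verify a Kolmogorov twist (non-degeneracy) condition on the Birkhoff normal form up to quartic order; and invoke a discrete-time symplectic KAM theorem to produce a positive-measure Cantor family of $M$-invariant Lagrangian tori accumulating at $\rho_0$. Both this family and its complement are then $M$-invariant sets of positive measure in any neighborhood of $\rho_0$, ruling out ergodicity.

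To produce $\rho_0$, I would use the explicit description of $\X_\ell$ in terms of trace coordinates on $\SU(3)$-representations of $\pi_1(\Sigma_{1,1})=\langle A,B\rangle$ (following Lawton and \cite{GLX}), together with the polynomial action of $\mathcal{MCG}(\Sigma_{1,1})$ on these coordinates. Symmetric loci, for instance the fixed sets of the natural finite-order involutions of the coordinate ring, produce algebraic fixed points of the polynomial map $M$ for arithmetically favorable choices of $M$ and $\ell$; I would pick $\rho_0$ to lie on such a symmetric locus, as Brown does for $\SU(2)$.

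For the spectral analysis, I would identify $T_{\rho_0}\X_\ell \cong H^1(\Sigma_{1,1},\partial\Sigma_{1,1};\mathrm{ad}\,\rho_0)$ with the Goldman symplectic form and decompose the adjoint bundle $\mathrm{ad}\,\rho_0$ under the centralizer of $\rho_0$. Along the directions fixed by conjugation by $\rho_0(A),\rho_0(B)$ the tangent map $dM_{\rho_0}$ acts as $M$ itself (generally hyperbolic, but tangent to a proper $M$-invariant subvariety that I can set aside), while on each complementary root-space direction it acts by $M$ twisted by an explicit rotation determined by the eigenvalues of $\rho_0(A),\rho_0(B)$. A genericity argument in the choice of $\ell$ then places the eigenvalues of $dM_{\rho_0}$ on the unit circle and makes them Diophantine and free of resonances up to any prescribed order.

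The main obstacle is the twist computation. Following Brown's approach for $\SU(2)$ and filling in the gap noted by Matheus, I would build a local analytic symplectic chart around $\rho_0$, interpolate $M$ by the time-one map of an analytic Hamiltonian to quartic order in the local coordinates, compute the Birkhoff normal form, and check non-degeneracy of the resulting twist form (a scalar for $\SU(2)$, a $2\times 2$ determinant for $\SU(3)$). Non-vanishing is a polynomial condition in the entries of $M$ and the parameters in $\ell$, so it holds on a Zariski-open set; I would establish it by explicit evaluation at a carefully chosen test point. Once the twist is non-degenerate, a standard discrete KAM theorem for analytic symplectic diffeomorphisms (the Moser twist theorem in dimension two, its higher-dimensional Kolmogorov generalization in the $\SU(3)$ case) delivers the invariant tori, and non-ergodicity follows.
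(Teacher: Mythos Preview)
Your high-level plan---apply KAM theory to the action of a hyperbolic $M$ near an elliptic fixed point on a relative $\SU(3)$ character variety---is exactly the paper's strategy, and the emphasis on computing the Birkhoff normal form and verifying a twist condition is correct. But two concrete problems in your outline would prevent it from going through.

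First, the relative varieties $\X_\ell=\mathfrak{L}_{(\ell,0)}$ that the paper uses are six-dimensional (for $\ell$ in the interior of $\Delta\cap\R$), not four; the twist matrix $(b_{jk})$ is therefore $3\times 3$, not $2\times 2$. Second, and more seriously, your spectral step proposes to ``set aside'' the hyperbolic directions of $dM_{\rho_0}$ as tangent to an invariant subvariety and apply KAM on the complementary elliptic directions. This does not work: the KAM theorem you need (\cite[Theorem~1.4]{EFK}) requires a \emph{fully elliptic} fixed point---all eigenvalues of $dM_{\rho_0}$ on the unit circle. If some eigenvalues are hyperbolic, any invariant tori obtained by KAM on a center manifold have measure zero in $\X_\ell$ and do not obstruct ergodicity. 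The paper resolves this not by an abstract decomposition but by a direct computation: it takes $M$ to be the cat map, produces an explicit one-parameter family $\rho_s^{\otimes 2}$ of fixed points (the second symmetric power of the $\SU(2)$ fixed-point family), and verifies numerically that for $s$ near $0.249$ (equivalently $\ell$ near $-1$) the full six-dimensional spectrum of $dM$ is elliptic. It then builds an explicit local chart by eliminating two trace variables via $P/2=\ell$ and $P^2-4Q=0$, computes the third jet of $M$ in that chart, extracts the $3\times3$ matrix $(\alpha_{jk})$ of first Birkhoff invariants, checks that its determinant is nonzero, and finally verifies the non-planarity condition of \cite{EFK}. Your genericity-in-$\ell$ argument for Diophantine ellipticity would have to be replaced by something like this: an explicit family of fixed points along which the spectrum is shown to be elliptic for some parameter range, since for $\ell$ near $3$ the same family has four hyperbolic eigenvalues and KAM is unavailable there.
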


The theorem is proved by brute force.  Knowing the explicit structure of $\X_\ell$ from work of Lawton \cite{La1, FlLa1} we computationally demonstrate that the conditions of KAM theory are satisfied.  There are many subtle points to this computation which we highlight along the way.

It is natural to ask if we can generalize this work to $\SU(n)$ for $n\geq 4$.  Unfortunately, we lack a sufficient computational understanding of the character varieties in these more general cases to carry out our proof.

The rest of the article is organized as follows.  In Section \ref{sec-2} we review the moduli space of flat parabolic bundles from the point of view of character varieties. In Section \ref{sec-3} we discuss the symplectic geometry of character varieties setting up the context where KAM theory can be employed.  In Section \ref{sec-4} we give a complete treatment of Brown's theorem filling in a gap in his original proof.  This is accomplished by invoking a theorem of R\"ussmann to make up for the gap.   Section \ref{sec-5} concerns fixed points of the cat map (our choice of hyperbolic element of the mapping class group) in preparation for the proof of our main theorem.  In Section \ref{sec-6}, we first illustrate the proof of our main theorem in the context of Brown's theorem.  This allows the reader to ``warm up'' to the ideas that will be used to prove our main theorem (which is significantly more complicated).  Thereafter, we prove the main theorem, outsourcing computations to {\it Mathematica} notebooks we have made public on GitHub\footnote{\url{https://github.com/seanlawton/Non-ergodicity-on-SU-2-and-SU-3-character-varieties-of-the-once-punctured-torus}}.  The computations depend on precise formulations from KAM theory in the context of symplectomorphisms.  As these formulations are not explicitly in the literature, we put them in Appendices \ref{appa} and \ref{appb}.

\subsection*{Acknowledgments}
Lawton is partially supported by a Collaboration grant from the Simons Foundation, and thanks IHES for hosting him in 2021 when this work was advanced.  Forni was supported by the NSF grant DMS 2154208, and Goldman was supported by NSF grant DMS 1709791.  We thank Rapha\"el Krikorian and Peter Gothen for helpful correspondence.  We also thank both referees for helpful remarks.

\section{Character Varieties of the Punctured Torus}
\label{sec-2}

\subsection{Character Varieties} 
Let $G$ be a real reductive Lie group, and let $\Gamma$ be a finitely presentable group.  The set of homomorphisms $\hm(\Gamma, G)$ admits a natural topology from $\Gamma$ and $G$ as follows.  As $\Gamma$ is finitely presentable, it admits a presentation $\Gamma_P=\langle \gamma_1,...,\gamma_r\ |\ w_1,...,w_s\rangle$ where the $w_i$'s are words in the generators.  Then, the function $\iota_P:\hm(\Gamma_P,G)\to G^r$ given by $\rho\mapsto (\rho(\gamma_1),...,\rho(\gamma_r))$ is injective and so we declare a set $U\subset  \hm(\Gamma_P,G)$ to be open if and only if $\iota_P(U)$ is open in $\iota_P(\hm(\Gamma_P,G))\subset G^r$ with the subspace topology.  In fact, letting $\bbone\in G$ be the identity, $\hm(\Gamma_P,G)$ is cut out of the manifold $G^r$ by the analytic equations $\{w_i(g_1,...g_r)=\bbone\ |\ 1\leq i\leq s\}$ making $\hm(\Gamma_P,G)$ an {\it analytic subvariety} of $G^r$. 

Now if $\Gamma_{P'}=\langle \gamma_1',...,\gamma_{r'}'\ | w_1',...,w_{r'}'\rangle$ is a different presentation of $\Gamma$, there exists an isomorphism of groups $\Phi:\Gamma_P\to \Gamma_{P'}$ given by $\gamma_i\mapsto W_i$ where $W_i$'s are words in the generators of $\Gamma_{P'}$.  This homomorphism lifts to a homomorphism of free groups $\widehat{\Phi}:F_r\to F_{r'}$ defined likewise.  Thus, we have a function between manifolds $\widehat{\Phi}_*:G^{r'}\to G^r$ given by $(g_1,...,g_{r'})\mapsto (W_1(g_1,...,g_{r'}),...,W_{r}(g_1,...,g_{r'}))$; $\widehat{\Phi}_*$ is analytic since the group operations in $G$ are analytic.  The same can be said for $\widehat{\Phi^{-1}}_*$.  Since these two analytic maps are bijective on the images of $\iota_P$ and $\iota_{P'}$ respectively, we conclude that $\Phi_*:\hm(\Gamma_Q,G)\to \hm(\Gamma_P,G)$ given by $\Phi_*(\rho)=\rho\circ \Phi$ is an {\it analytic isomorphism}.  In short, the structure of an analytic variety on $\hm(\Gamma, G)$ is independent of presentation (up to natural equivalence).

The group $G$ acts (analytically) by conjugation on $\hm(\Gamma, G)$.  Define $\hm(\Gamma, G)^*$ to be the subspace of homomorphisms with {\it closed} conjugation orbits, and let $\X(\Gamma,G):=\hm(\Gamma,G)^*/G$ be the quotient space by conjugation.  The space $\X(\Gamma, G)$ is known as the {\it $G$-character variety of $\Gamma$}.  

In general, if $\Gamma$ is the fundamental group of a manifold $M$, $\X(\Gamma, G)$ corresponds to a moduli space of flat principal $G$-bundles over $M$ by associating to each bundle its holonomy and considering two such bundles equivalent if their holonomies are topologically indistinguishable.

In the case when $G$ is compact, then $\X(\Gamma, G)$ is simply the quotient space $\hm(\Gamma, G)/G$.  When $G$ is complex reductive, then $\X(\Gamma, G)$ is homeomorphic to the geometric invariant theoretic quotient $\hm(\Gamma, G)/\!\!/G$ with the Euclidean topology (by \cite[Theorem 2.1]{FlLa3}), and homotopic to the non-Hausdorff quotient $\hm(\Gamma, G)/G$ (by \cite[Proposition 3.4]{FLR}).  More generally, when $G$ is real reductive $\X(\Gamma, G)$ embeds into a real affine space as a closed subspace and hence is Hausdorff \cite{RS}.

When $\Gamma$ is the fundamental group of a genus 1 orientable surface with one boundary component (a one-holed torus for short), $\Gamma$ is isomorphic to a rank 2 free group $F_2$.  We now review the structure of the character varieties we concern ourselves with in this case.

\subsection{$\SL(2,\C)$ and $\SU(2)$ Character Variety of $F_2$}

Let $G=\SL(2,\C)$, $K=\SU(2)$, and $F_2=\langle a,b\rangle$ a free group of rank 2.  
Let $w\in F_2$ and $\tr_w:\hm(F_2,G)\to \C$ be the conjugation invariant regular function defined by $\tr_w(\rho)=\tr(\rho(w))$.  Under the identification $\hm(F_2,G)\cong G^2$, letting $A:=\rho(a)$ and $B:=\rho(b)$,  $\tr_w(A,B)=\tr(w(A,B)),$ where $w(A,B)$ is the word $w$ with $a$ replaced by $A$ and $b$ replaced by $B$.

In these terms, the Fricke-Vogt Theorem says that the coordinate ring $\C[\X(F_2,G)]=\C[\tr_a,\tr_b,\tr_{ab}]$ and so $\X(F_2,G)\cong \C^3$.

Now, $\X(F_2,K)$ naturally embeds into $\X(F_2,G)$ by Weyl's unitary trick.  For a representation $\rho:F_2\to G$ to be in $K=\SU(2)$ it is necessary that the functions $\tr_w$ take value in $[-2,2]$ since a matrix in $\SU(2)$ is conjugate to one with eigenvalues $\{e^{i\theta}, e^{-i\theta}\}$ and so has trace equal to $2\cos(\theta)$.  Conversely, one can show that it sufficient (up to conjugation) to demand that $\tr_a, \tr_b, \tr_{ab},$ and $\tr_{aba^{-1}b^{-1}}=\tr_a^2+\tr_b^2+\tr_{ab}^2-\tr_a\tr_b\tr_{ab}-2$ take value in $[-2,2]$; see \cite[Theorem 6.5]{FlLa1}.  The resulting semi-algebraic space is a closed 3-ball in $\R^3$ (Figure \ref{su21holed}).

\begin{figure}[ht!]
\includegraphics[scale=0.4]{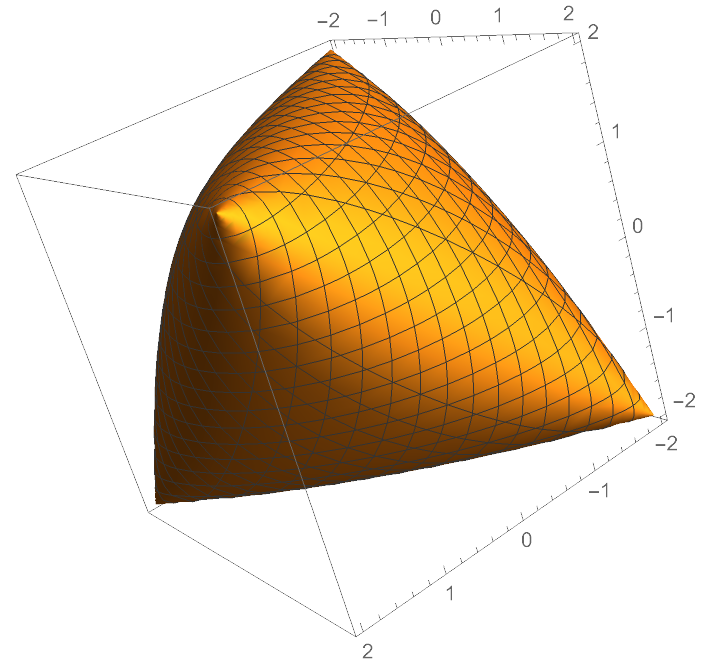} \caption{$\X(F_2,\SU(2))$}
\label{su21holed} 
\end{figure}

\subsection{$\SL(3,\C)$ and $\SU(3)$ Character Variety of $F_2$}
The comparable problem for $G=\SL(3,\C)$ and $K=\SU(3)$ was solved in \cite{La0,La1,FlLa1}, but is much harder.  Here follows a summary.  In this case, there are more generators necessary.  In particular, the coordinate ring $\C[\X(F_2,G)]$ is (minimally) generated by the nine functions $$\{\tr_a,\tr_{a^{-1}},\tr_b,\tr_{b^{-1}},\tr_{ab},\tr_{a^{-1}b^{-1}},\tr_{ab^{-1}},\tr_{a^{-1}b},\tr_{aba^{-1}b^{-1}}\}.$$  These generators satisfy a single relation $\tr_{aba^{-1}b^{-1}}^2-P\tr_{aba^{-1}b^{-1}}+Q$ where $P,Q\in \C[\tr_a,\tr_{a^{-1}},\tr_b,\tr_{b^{-1}},\tr_{ab},\tr_{a^{-1}b^{-1}},\tr_{ab^{-1}},\tr_{a^{-1}b}]$.  Consequently, $\X(F_2,G)$ is a hypersurface in $\C^9$ that branched double-covers $\C^8$.

If we set $x=\tr_a$, $y=\tr_b$, $z=\tr_{ab}$, $t=\tr_{ab^{-1}}$,$X=\tr_{a^{-1}}$, $Y=\tr_{b^{-1}}$, $Z=\tr_{a^{-1}b^{-1}}$, and $T=\tr_{a^{-1}b}$, then:
$$P=t T-t X y-T x Y+x X y Y+x X-x y Z-X Y z+y Y+z Z-3$$ and
\begin{eqnarray*}Q&=&-2 t^2 x Y+t^2 X Z+t^2 y z+t^3+t T x X+t T y Y+t T z Z-6 t T\\ &+& t x^2y+t x^2 Y^2-t x X^2 y-t x X Y Z-t x y Y z-3 t x z+t x Z^2\\&+&t X^2z-t X y^2 Y+3 t X y+t X Y^2+t y^2 Z+t Y z^2-3 t Y Z+T^2 x z\\&-&2 T^2X y+T^2 Y Z+T^3-T x^2 X Y+T x^2 Z-T x X y z+T x y^2\\&-&T x y Y^2+3 Tx Y+T X^2 y^2+T X^2 Y-T X y Y Z+T X z^2\\&-&3 T X Z-3 T y z+T y Z^2+TY^2 z+x^2 X^2 y Y-x^2 X y Z+x^2 y^2 z\\&-&x^3 y Y+x^2 Y z+x^3-x X^2 Yz+x X y^2 Y^2-x X y^3+x X y Y\\&-&x X Y^3+x X z Z-6 x X-x y^2 Y Z-2 xy z^2+3 x y Z+x Y^2 Z\\&-&X^3 y Y+X^2 y Z+X^2 Y^2 Z+X^3+X y^2 z-X yY^2 z+3 X Y z\\&-&2 X Y Z^2+y^3+y Y z Z-6 y Y+Y^3+z^3
-6 z Z+Z^3+9.\end{eqnarray*}

By \cite{FlLa1}, $\X(F_2,G)$ is homotopic to $\X(F_2,K)$ and $\X(F_2,K)$ is homeomorphic to an 8-sphere $S^8$.
For representation taking values in $\SU(n)$, since $A^{-1}=\overline{A}^\dagger$ in $\SU(n)$, we have that $\tr_{w^{-1}}=\overline{\tr_w}$ for any word $w\in F_2$.  Thus, the real coordinate ring of $\X(F_2,K)$ is generated by the real and imaginary parts of $$\{\tr_a,\tr_b,\tr_{ab},\tr_{ab^{-1}},\tr_{aba^{-1}b^{-1}}\},$$ subject to the relations $\mathrm{Re}(\tr_{aba^{-1}b^{-1}})=P/2$ and $$\mathrm{Im}(\tr_{aba^{-1}b^{-1}})=\pm\sqrt{Q-P^2/4}.$$  Note that both $P,Q$ will be real polynomials as $P$ is the sum $$P=\tr_{aba^{-1}b^{-1}}+\tr_{bab^{-1}a^{-1}}=2\mathrm{Re}(\tr_{aba^{-1}b^{-1}})$$ while $Q$ is the product $$Q=\tr_{aba^{-1}b^{-1}}\cdot\tr_{bab^{-1}a^{-1}}=|\tr_{aba^{-1}b^{-1}}|^2,$$ and hence will simplify to {\it real} polynomials in the real and imaginary parts of $\{\tr_a,\tr_b,\tr_{ab},\tr_{ab^{-1}}\}$.
 
\section{Relative Character varieties}\label{sec-3}

Let $\Sigma_{n,g}$ be a compact connected orientable surface of genus $g\,\geq\, 0$ with $n\,\geq\,0$
boundary components; we assume that $n\,\geq\, 2$ if $g\,=\,0$ since otherwise the surface is simply-connected
and the moduli spaces we will consider are trivial. Pick a base point $*$ in the interior of $\Sigma_{n,g}$.
The fundamental group of $\Sigma_{n,g}$ admits the presentation:
$$\pi_1(\Sigma_{n,g},\,*)\,\cong\, \left\langle a_1,\,b_1,\,\dots,\,a_g,\,b_g,\,c_1,\,\dots,\,c_n\ \Big\vert\ 
\prod_{i=1}^g[a_i,b_i]\prod_{j=1}^nc_j\,=\,1\right\rangle,$$ where $[x,\,y]\,=\,xyx^{-1}y^{-1}$ is the commutator.

For $G$ compact or complex reductive, let 
$\X_{n,g}(G):=\X(\pi_1(\Sigma_{n,g}),G).$  The dimension (complex if $G$ is complex reductive and real if $G$ is compact) of $\X_{n,g}(G)$ is $$-\chi(\Sigma_{n,g})\dim G +\zeta_{n,g},$$ where $\zeta_{n,g}$ depends on the rank of $G$, $g,$ and $n$ (see \cite[Lemma 2.2]{BHJL} for details).

When $n\,>\,0$, for every $1\, \leq\, i\, \leq\, n$ define the boundary map
$$\mathfrak{b}_i\,:\,\X_{n,g}(G)\,\longrightarrow\, \X(\Z,G)$$ by sending a representation class $[\rho]$ to $[\rho_{|_{c_i}}]$. Subsequently, we define $$\mathfrak{b}_{n,g}\,:=\,(\mathfrak{b}_1,\,\dots,\,\mathfrak{b}_n)
\,:\,\X_{n,g}(G)\,\longrightarrow\, (\X(\Z,G))^{ n}.$$ 

Let $\tau\,\in\, \mathfrak{b}_{n,g}\left(\X_{n,g}(G)\right)\,\subset\, (\X(\Z,G))^n$ be a point in the image of the 
boundary map and define $\mathfrak{L}_\tau\,:=\,\mathfrak{b}_{n,g}^{-1}(\tau).$ The singular locus of $\X_{n,g}(G)$ is 
a proper closed sub-variety; denote its complement by $\mathcal{X}_{n,g}(G)$. So $\mathcal{X}_{n,g}(G)$ is a
manifold that is dense in $\X_{n,g}(G)$. Since $\mathfrak{b}_{n,g}$ is dominant, its regular values are generic.
Thus, at such a point, $\mathcal{L}_\tau\,:=\,\mathfrak{L}_\tau\cap\mathcal{X}_{n,g}(G)$ is a submanifold of dimension 
$$d_\tau:=\chi(\Sigma_{n,g})\dim G +\zeta_{n,g}-n(\mathrm{Rank}(G)).$$

It is shown in \cite{La2} that $\cup_\tau\mathcal{L}_\tau$ foliate $\mathcal{X}_{n,g}(G)$ by symplectic 
submanifolds, making $\mathcal{X}_{n,g}(G)$ a Poisson manifold (real if $G$ is compact and complex if $G$ is reductive). This structure continuously extends over 
all of $\X_{n,g}(G)$ making it a Poisson variety; a variety whose sheaf of regular functions is a sheaf of Poisson 
algebras (see \cite{BLR} for details).

\subsection{Symplectic Measure}
On each symplectic leaf $\mathcal{L}_\tau$ there is a symplectic volume by considering the $(d_\tau/2)$-th exterior power of the 2-form $\omega$.  

The group of automorphisms $\mathrm{Aut}(\Gamma)$ acts on $\hm(\Gamma, G)$ by $\alpha\cdot \rho:=\rho\circ \alpha^{-1}$ for any groups $\Gamma$ and $G$.  This action descends to an action of $\mathrm{Out}(\Gamma):=\mathrm{Aut}(\Gamma)/\mathrm{Inn}(\Gamma)$ on the $G$-conjugation quotient $\hm(\Gamma, G)/G$ by $[\alpha]\cdot[\rho]=[\rho\circ \alpha^{-1}]$.  By continuity, this restricts to an action of  $\mathrm{Out}(\Gamma)$ on $\X(\Gamma, G)$ when $G$ is real reductive.  

When $\Gamma=\pi_1(\Sigma_{n,g})$, we can define $\mathrm{Out}_{n,g}(\Gamma)\leq \mathrm{Out}(\Gamma)$ to be the subgroup that fixes the boundary curves (up to homotopy) in $\Sigma_{n,g}$.  Then 
$\mathrm{Out}_{n,g}(\Gamma)$ is isomorphic to the mapping class group of $\Sigma_{n,g}$, that is, $$\mathrm{Out}_{n,g}(\Gamma)\cong\mathrm{MCG}(\Sigma_{n,g}).$$  Since the mapping class group preserves the boundary curves and the underlying surface, we obtain an action on $\mathcal{L}_\tau$.  
In \cite{Hueb}, it is shown that when $G$ is compact, this invariant measure is finite and so can be taken to be a probability measure.  Morally this follows since the symplectic form, and so the symplectic volume (in the same measure class as Lebesgue measure), continuously extends over $\mathfrak{L}_\tau$, which is compact, and so the induced measure is finite on $\mathfrak{L}_\tau$.  Since the singular locus is a proper subvariety it has measure zero.  Hence the restricted measure on the smooth locus $\mathcal{L}_\tau$ stays finite.

\subsection{Relative Character Varieties of $\Sigma_{1,1}$} Throughout this section we let $\Gamma=\pi_1(\Sigma_{1,1})\cong F_2=\langle a,b\rangle$, and we let $c^{-1}=aba^{-1}b^{-1}$ be the boundary homotopy class in $\Sigma_{1,1}$.

Recall that for $G=\SU(2)$, the character variety $\X(\Gamma, G)$ is homeomorphic to closed ball in $\R^3$  (Figure \ref{su21holed}).  Let $$\kappa:=\tr_{c}=x^2+y^2+z^2-xyz-2.$$ The ball is parametrized by $x=\tr_a,$ $y=\tr_b,$ and $z= \tr_{ab}$ subject to the conditions: $-2\leq x,y,z,\kappa\leq 2$.  The relative character varieties in this case are determined by fixing the value of $\mathfrak{b}_{1,1}=\kappa$.  So each symplectic leaf is a 2-sphere as $\kappa$ is a height function for the ball\footnote{See \url{https://youtu.be/lRdsxwr_dHI} for a video of the symplectic foliation.}.  The value $\kappa=-2$ is a point (degenerate 2-sphere) and the other 2-spheres are all smooth except the level $\kappa=2$ which is topologically still a 2-sphere  (the boundary of Figure \ref{su21holed}), but has 4 orbifold singularities at the four central homomorphisms $(\pm I_2,\pm I_2)$, each locally homeomorphic to a cone over $\R\mathrm{P}^1$.

\begin{figure}[ht!]
\includegraphics[scale=0.4]{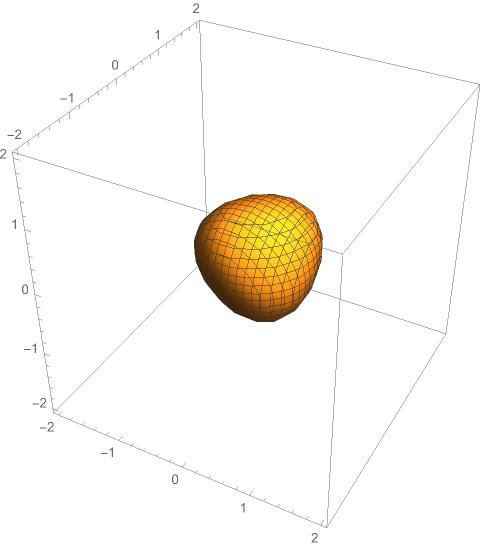} \caption{$\kappa=-1.3$}
\label{su21holedsmooth} 
\end{figure}

When $G=\SU(3)$, the character variety $\X(\Gamma,G)$ is homeomorphic to an 8-sphere (topologically, but it is singular in the trace coordinates) by \cite{FlLa1}.  To understand the relative character varieties in this case we need to study the boundary map.  There is one boundary $c$ but conjugacy classes are determined by two functions: $\tr_c$ and $\tr_{c^{-1}}$. Since $\tr_c=\overline{\tr_{c^-1}}$ we need only consider the real and imaginary parts of $\tr_{c^{-1}}$ to determine the boundary conjugacy class.  From earlier, we know  $\mathrm{Re}(\tr_{c^{-1}})=P/2$ and $\mathrm{Im}(\tr_{c^{-1}})=\pm\sqrt{Q-P^2/4}$ where $P,Q$ are the polynomials defining $\X(F_2,\SL(3,\C))$ which are real valued on $\X(F_2,\SU(3))$.  So $$\mathfrak{b}_{1,1}=(P/2,\pm\sqrt{Q-P^2/4})$$ is the map $\X(\Gamma,G)\to \X(\Z,\SU(3))$ where $\Delta:=\X(\Z,\SU(3))$ is depicted in Figure \ref{triangle}.  

\begin{figure}[ht!]
\includegraphics[scale=0.4]{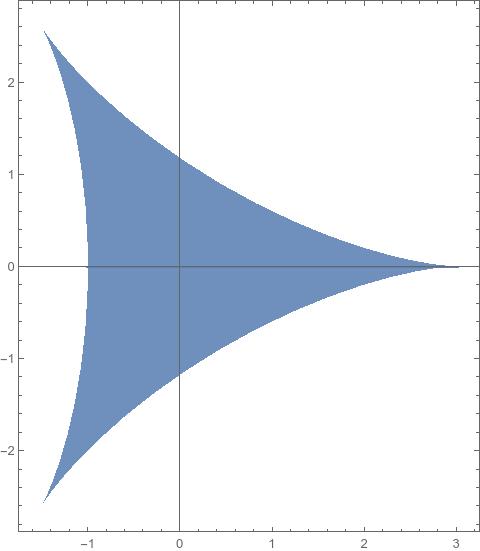} \caption{The \it{deltoid} $\Delta:=\X(\Z,\SU(3))$.}
\label{triangle} 
\end{figure}

The fibres of $\mathfrak{b}_{1,1}$ are the relative character varieties discussed earlier; denote $\mathfrak{L}_{(\zeta, \eta)}:=\mathfrak{b}_{1,1}^{-1}(\zeta, \eta)$ for any $(\zeta,\eta)\in \X(\Z,\SU(3))$.  Note that $\Delta\cap \R=[-1,3].$ Let $\Delta^\circ$ and $\partial\Delta$ be the interior and boundary of $\Delta$, respectively.

 We now consider the possibilities for these fibres.

\begin{proposition} Let $(\zeta,\eta)\in \Delta$.  Then:
\begin{enumerate}
 \item $\mathfrak{L}_{(\zeta, \eta)}$ admits the structure of a normal projective variety over $\C$.
 \item $\mathfrak{L}_{(\zeta, \eta)}\cong \mathfrak{L}_{(\zeta, -\eta)}$.
 \item $\mathfrak{L}_{(3, 0)}\cong \X(\Z^2,\SU(3))\cong\C\mathrm{P}^2$.
 \item $\mathfrak{L}_{(-3/2, 3\sqrt{3}/2)}\cong \mathfrak{L}_{(-3/2, -3\sqrt{3}/2)}\cong \{*\},$
 \item  $\mathfrak{L}_{(\zeta, \eta)}$ is a smooth 6-manifold if $(\zeta,\eta)\in\Delta^\circ-\Delta\cap\R.$
 \item If $(\zeta,\eta)\in\partial \Delta-(\Delta\cap\R\cup\{(-3/2, \pm 3\sqrt{3}/2)\})$, then $\mathfrak{L}_{(\zeta, \eta)}$ is a smooth 4-manifold.
 \item $\mathfrak{L}_{(\zeta, 0)}$ is a singular 6 dimensional variety with 4 dimensional singular locus if $(\zeta,0)\in\Delta^\circ\cap\R.$  The singular locus consists of $\mathsf{S}(\mathsf{U}(1) \times \mathsf{U}(2))$-valued representations. 
 \item $\mathfrak{L}_{(-1, 0)}$ is a singular 4 dimensional variety with a 2 dimensional singular locus consisting of reducible representations whose commutator is $\mathrm{diag}(-1,-1,1)$.
 
\end{enumerate}

\end{proposition}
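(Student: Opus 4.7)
The plan is to analyse each fibre $\mathfrak{L}_{(\zeta,\eta)}$ by first identifying the $\SU(3)$-conjugacy class $C$ of the commutator $c^{-1}=[a,b]$ with $\tr(c^{-1})=\zeta+i\eta$, and then studying the space of pairs $(\rho(a),\rho(b))\in\SU(3)^2$ with $[\rho(a),\rho(b)]\in C$ modulo simultaneous conjugation. The central dimension count is that at irreducible representations $\mathrm{PU}(3)$ acts freely, so
$$\dim_{\R} \mathfrak{L}_{(\zeta,\eta)}\,=\,\dim\SU(3)+\dim C-\dim\mathrm{PU}(3)\,=\,\dim C;$$
regular classes (three distinct eigenvalues) have $\dim C=6$ and classes with exactly one repeated eigenvalue have $\dim C=4$.

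For (1), I would realise $\mathfrak{L}_{(\zeta,\eta)}$ as the moduli space of semistable parabolic $\SL(3,\C)$-bundles on $\Sigma_{1,1}$ with prescribed parabolic weights via the Mehta--Seshadri correspondence (cf.\ \cite{La1,FlLa1}), giving a normal projective variety. For (2), the entrywise complex conjugation $\rho\mapsto\bar\rho$ is an involution of $\hm(F_2,\SU(3))$ commuting with $\SU(3)$-conjugation and sending every trace $\tr_w$ to $\overline{\tr_w}$; it descends to a self-homeomorphism of $\X(F_2,\SU(3))$ mapping $\mathfrak{L}_{(\zeta,\eta)}$ onto $\mathfrak{L}_{(\zeta,-\eta)}$. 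For (3), a unitary $3\times 3$ matrix of trace $3$ has all eigenvalues equal to $1$, so $\tr(c^{-1})=3$ forces $c=I$ and the representation factors through $\Z^2$; the classical identification $\X(\Z^2,\SU(3))\cong(T\times T)/W\cong\C\mathrm{P}^2$ with $T$ a maximal torus and $W=S_3$ the Weyl group concludes. For (4), $\tr(c^{-1})=3e^{\pm 2\pi i/3}$ forces $c^{-1}=e^{\pm 2\pi i/3}I_3$, and up to $\SU(3)$-conjugation there is a unique pair $(A,B)$ with $[A,B]$ equal to a given primitive central cube root of unity (the Weyl--Heisenberg clock-and-shift pair).

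Parts (5)--(8) rest on the following observation: every reducible $\rho:F_2\to\SU(3)$ is either completely split as $\U(1)^3$ (forcing $c=I$) or of type $\U(1)\oplus\U(2)$, in which case the $\U(1)$-summand contributes the eigenvalue $1$ to $\rho(c)$ and the $\U(2)$-commutator lies in $\SU(2)$, contributing complex-conjugate eigenvalues $e^{\pm i\theta}$; hence every reducible representation has $\tr(c^{-1})=1+2\cos\theta\in[-1,3]\subset\R$. Consequently, for $(\zeta,\eta)\notin\Delta\cap\R$ the entire fibre is irreducible; the $\mathrm{PU}(3)$-action is free and the slice theorem exhibits $\mathfrak{L}_{(\zeta,\eta)}$ as a smooth manifold of dimension $\dim C$, yielding (5) (interior, $C$ regular, $\dim C=6$) and (6) (boundary off real axis and vertices, $\dim C=4$).

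For (7) at $(\zeta,0)\in\Delta^\circ\cap\R$, $C$ is still regular, so the irreducible stratum is a smooth $6$-manifold; the reducibles are conjugate into $\mathsf{S}(\U(1)\times\U(2))$ and parameterised up to $\U(2)$-conjugation by a rep $B:F_2\to\U(2)$ with $[B(a),B(b)]$ in the fixed $\SU(2)$-class (then $\alpha=(\det B)^{-1}$ is forced), which is a fibre of the relative $\U(2)$-character variety of real dimension $4$, with stabilizer $\mathsf{S}(\U(1)\times\U(2))$. For (8) at $(-1,0)$, $C$ has the repeated-eigenvalue structure $\{1,-1,-1\}$ and $\dim C=4$, giving a smooth irreducible stratum of dimension $4$; the reducibles satisfy $[B(a),B(b)]=-I\in\SU(2)$, i.e.\ $B(a),B(b)$ anticommute in $\U(2)$, so up to conjugation they form a $2$-parameter family of phase-scaled anticommuting Pauli-type pairs, matching the claimed $2$-dimensional singular locus. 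The main technical obstacle is justifying the smooth-manifold structure on each irreducible stratum and the local cone model at each reducible one: this will require verifying that the commutator map $(A,B)\mapsto[A,B]$ is a submersion transverse to $C$ at every irreducible pair (its moment-map property), combined with a slice-theorem analysis at the reducible representations.
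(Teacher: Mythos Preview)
Your proposal is essentially the same approach as the paper's: both realise $\mathfrak{L}_{(\zeta,\eta)}$ as $\{(A,B):[A,B]=C\}/\mathrm{Stab}(C)$, use Schur's lemma to get freeness of $\mathrm{PU}(3)$ on irreducibles, and identify the reducibles (conjugate into $\mathsf{S}(\mathsf{U}(1)\times\mathsf{U}(2))$, hence with commutator trace in $[-1,3]$) as the singular locus. Your analysis of the reducible strata in (7) and (8) is in fact more detailed than the paper's, which simply asserts the dimensions from the stabilizer count. Two small remarks: for (1) the paper cites Bhosle--Ramanathan rather than Mehta--Seshadri (your references \cite{La1,FlLa1} concern the trace-coordinate structure, not the parabolic moduli identification); and for (2) the paper argues via the quadratic $\tr_{c^{-1}}^2-P\tr_{c^{-1}}+Q=0$ having the conjugate roots $\zeta\pm i\eta$, which is the trace-coordinate shadow of your complex-conjugation involution $\rho\mapsto\bar\rho$.
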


\begin{proof}

Giving a complex structure on $\Sigma_{1,1}$, viewing the boundary as a puncture (removing a point), produces the structure of a normal projective variety on $\mathfrak{L}_{(\zeta, \eta)}:=\mathfrak{b}_{1,1}^{-1}(\zeta, \eta)$ by identifying it (up to homeomorphism) with the moduli space of semi-stable principal bundles on $\Sigma_{1,1}$ as in \cite{B-R}.  Under this identification, irreducible representations correspond to the Zariski open subset of stable bundles.  In all cases, the smooth locus is exactly the irreducible representations.
 
Next, for any $(\zeta,\eta)\in\Delta$, we have $\mathfrak{b}_{1,1}^{-1}(\zeta, \eta)\cong\mathfrak{b}_{1,1}^{-1}(\zeta, -\eta)$ since $\tr_{c^{-1}}^2-P\tr_{c^{-1}}+Q=0$ has at most two solutions $\{\zeta\pm i\eta\}$.  So we only need to analyze fibres $\mathfrak{b}_{1,1}^{-1}(\zeta, \eta)$ with $\eta\geq 0$.

To every pair $(\zeta, \eta)\in \Delta$, there exists a unique conjugation class in $\SU(3)$ and such a conjugation class is represented by a diagonal matrix.  We refer to this diagonal matrix (unique up to permutation of its eigenvalues) as the {\it boundary matrix}.

If the boundary matrix is central (corresponding to the three vertices in Figure \ref{triangle}), there are two possibilities.  By \cite{ACG}, $$\mathfrak{L}_{(3, 0)}\cong \X(\Z^2,\SU(3))\cong\C\mathrm{P}^2,$$ otherwise we have $$\mathfrak{L}_{(-3/2, 3\sqrt{3}/2)}\cong \mathfrak{L}_{(-3/2, -3\sqrt{3}/2)}\cong \{*\}$$ is a point by \cite[Proposition 7.1]{GLX}.

Another way to realize $\mathfrak{L}_{(\zeta, \eta)}$ is to first pick a boundary matrix $C$, that is, a matrix so $\tr(C)=\zeta+i\eta$.  Then: $$\mathfrak{L}_{(\zeta, \eta)}\cong \{(A,B)\in \SU(3)^2\ |\ [A,B]=C\}/\{S\in \SU(3)\ |\ SCS^{-1}=C\},$$ where the stabilizer $\{S\in \SU(3)\ |\ SCS^{-1}=C\}$ acts by diagonal conjugation.

The next case to consider is the opposite extreme: the boundary matrix has three distinct eigenvalues.  This case corresponds to fibres over $\Delta^\circ-\Delta\cap\R.$  Each such fibre $\mathfrak{L}_{(\zeta, \eta)}$ consists of conjugacy classes of irreducible representations and so is smooth since the conjugation action of $\mathsf{P}\SU(3)$ is free on irreducible representations (by Shur's Lemma).  The dimension is 6 since $\X(F_2,\SU(3))$ is dimension 8 and we are imposing two real conditions by fixing the boundary.  

If $(\zeta,\eta)\in\partial \Delta-(\Delta\cap\R\cup\{(-3/2, \pm 3\sqrt{3}/2)\})$, then the boundary matrix is $\mathrm{diag}(a,a,1/a^2)$ with $a^2\not=1\not=a^3$.  As with the previous case, we have only irreducible representations and so $\mathfrak{L}_{(\zeta, \eta)}$ is again smooth.  But with a repeated eigenvalue, the stabilizer of the boundary matrix is dimension 4 (as opposed to dimension 2 in the previous case).  Consequently, $\mathfrak{L}_{(\zeta, \eta)}$ is a smooth 4-manifold.

$(\zeta,\eta)\in\Delta\cap \R$ if and only if the boundary matrix has 1 as an eigenvalue.  There are two cases not yet addressed: $\mathrm{diag}(1,-1,-1)$ and $\mathrm{diag}(1,a,1/a)$ with $a\not= -1$ (the case with $a=1$ is the identity matrix, already addressed above).

In both cases there are singularities (reducible representations) since 1 is an eigenvalue.  In the first case, where the boundary matrix has an eigenvalue that is not a root of unity, the singular locus is exactly $\mathsf{S}(\mathsf{U}(1) \times \mathsf{U}(2))$-valued representations.  The stabilizer of the boundary matrix has dimension 2.  Consequently, $\mathfrak{L}_{(\zeta, 0)}$  has dimension 6 with a 4 dimensional singular locus.

Lastly, $\mathfrak{L}_{(-1, 0)}$ has dimension 4 with a 2 dimensional singular locus as the stabilizer of the boundary matrix $\mathrm{diag}(1,-1,-1)$ has dimension 4.

\end{proof}

\begin{remark}
Given that $\X(F_2,\SU(3))$ is an 8-sphere, it is natural to think that for $(\zeta,\eta)\in\Delta^\circ-\Delta\cap\R,$ the space $\mathfrak{L}_{(\zeta, \eta)}$ is a 6-sphere.  However this is not the case since $\mathfrak{L}_{(\zeta, \eta)}$ admits the structure of a complete variety over $\C$ (of complex dimension 3) which implies its second cohomology group is non-trivial (whereas it would be trivial if it was a 6-sphere).  This remark also shows that $\X(F_2,\SU(3))$ does not admit the structure of a complex complete variety (as it is an 8-sphere). 
\end{remark}

\begin{remark}
The Poisson structure for $\X(F_2,\SU(3))$ can be obtained by restricting the Poisson structure on $\X(F_2,\SL(3,\C))$ obtained in \cite{La2} to $\X(F_2,\SU(3))$.  We have carried this process out in a {\it Mathematica} notebook.  From this we can calculate the bivector and the symplectic form on the smooth part of $\mathfrak{L}_{(\zeta, \eta)}$ for any boundary matrix.  Unlike the case of $\SU(2)$, the formula for the symplectic form is too complicated to write here, although the {\it Mathematica} notebook with the code and formulae are on GitHub \footnote{\url{https://github.com/seanlawton/Non-ergodicity-on-SU-2-and-SU-3-character-varieties-of-the-once-punctured-torus}}. 
\end{remark}

\section{Brown's theorem revisited}\label{sec-4}

Recall that the fundamental group of $\Sigma_{1,1}$ is a free group $\langle a,b\rangle$.  Let $\alpha,\beta$ represent curves in $\Sigma_{1,1}$ whose homotopy classes are $a,b$ respectively.  The Dehn twists $$\tau_{\alpha}=\left(\begin{array}{cc}1& 1\\ 0&1\end{array}\right) \textrm{ and } \tau_{\beta} = \left(\begin{array}{cc}1& 0\\ 1&1\end{array}\right)$$ about $\alpha$ and $\beta$  act via $$\tau_{\alpha}(\alpha)=\alpha, \quad \tau_{\alpha}(\beta)=\beta\alpha, \quad \tau_{\beta}(\alpha) = \alpha\beta, \quad \tau_{\beta}(\beta)=\beta.$$

In terms of the trace coordinates $x=\textrm{tr}(\rho(a))$, $y=\textrm{tr}(\rho(b))$ and $z=\textrm{tr}(\rho(ab))$ on the $\SU(2)$-character variety 
$$\X(F_2,\SU(2)) \cong \{(x,y,z)\in[-2,2]^3\ |\ -2\leq \kappa(x,y,z)\leq 2\},$$
where $\kappa(x,y,z)=x^2+y^2+z^2-xyz-2$.  The actions of $\tau_{\alpha}$ and $\tau_{\beta}$ are described by: 
\begin{equation}\label{e.3}
\tau_{\alpha}(x,y,z)=(x,z,xz-y) \textrm{ and } \tau_{\beta}^{-1}(x,y,z)=(xy-z,y,x).
\end{equation}
 
\subsection{Brown's theorem} The following statement is the main theorem of Brown \cite{Brown98}: 

\begin{theorem}\label{t.A} Let $h$ be a hyperbolic element of $\SL(2,\mathbb{Z})$. Then, there exists $-2<\ell<2$ such that $h$ does not act ergodically on the level $\kappa^{-1}(\ell)$ of the $\SU(2)$-character variety of the once-punctured torus. 
\end{theorem}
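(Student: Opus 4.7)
The plan is to produce non-ergodicity by constructing, at some interior level $\ell \in (-2,2)$, an $h$-invariant subset of $\kappa^{-1}(\ell)$ of positive but not full symplectic measure, manufactured from a KAM Cantor family of invariant circles for an iterate of $h$. Concretely, I would apply Moser's twist theorem for area-preserving diffeomorphisms of a surface to a suitable iterate $h^N$ near an elliptic fixed point $p \in \kappa^{-1}(\ell)$ of $h^N$, and then take the union with the $h$-orbit to promote the resulting $h^N$-invariant set to an $h$-invariant one.

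First I would locate a suitable periodic orbit. Using the explicit polynomial formulas \eqref{e.3} for the actions of $\tau_\alpha$ and $\tau_\beta^{\pm 1}$, the fixed-point system $h^N(x,y,z)=(x,y,z)$ is algebraic in the trace coordinates; the target is a solution $p$ with all three trace coordinates in $[-2,2]$ and with $-2 < \kappa(p) < 2$. For any specific hyperbolic word in the Dehn twists $\tau_\alpha,\tau_\beta$ such $p$ can be written down explicitly for suitably chosen $N$, and the analysis is uniform across the choice of $h$ once a periodic orbit is in hand.

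Next I would compute the differential $D(h^N)_p$ and restrict it to the symplectic tangent plane $T_p\kappa^{-1}(\ell) = \ker d\kappa_p$. Since the mapping class group preserves Goldman's symplectic form, which on the leaf has the explicit shape $\omega = \tfrac{2}{2x-yz}\,dy\wedge dz$, the restriction lies in $\SL(2,\R)$; I would verify by evaluating its trace that the eigenvalues are $e^{\pm i\theta}$ with $e^{ik\theta}\neq 1$ for $k\in\{1,2,3,4\}$. Because the family of admissible $(N,p)$ is large, any single low-order resonance can be dodged by changing the periodic orbit.

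The heart of the argument — and the point where Brown's original proof was found to contain a gap — is the verification of the Birkhoff non-degeneracy, or twist, condition. In symplectic polar coordinates $(I,\varphi)$ centered at $p$ on $(\kappa^{-1}(\ell),\omega)$, the Birkhoff normal form of $h^N$ reads
\[(I,\varphi) \longmapsto \bigl(I+O(I^{2}),\ \varphi+\theta+aI+O(I^{2})\bigr),\]
and the task is to show that the twist coefficient $a \in \R$ is non-zero. This amounts to reading off the third-order jet of $h^N$ at $p$ and combining it with the Taylor expansion of $\omega$, which is a finite but delicate symbolic computation done directly from \eqref{e.3}. Once $a\neq 0$ and non-resonance are in hand, Moser's twist theorem yields a positive-measure Cantor family of $h^N$-invariant smooth circles clustering at $p$; the union of their $h$-translates is an $h$-invariant subset of $\kappa^{-1}(\ell)$ of positive symplectic measure that is strictly smaller than the full 2-sphere (the circles cluster only near the finite $h$-orbit of $p$), proving non-ergodicity. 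I expect the principal difficulty to be precisely the verification that $a\neq 0$: extracting the cubic terms in a tractable closed form and ruling out their accidental cancellation is the delicate point that Brown's argument left unresolved.
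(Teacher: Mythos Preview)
Your proposal reproduces Brown's original strategy and, crucially, inherits its gap rather than closing it. You correctly identify the twist coefficient $a$ as the delicate point, but you do not supply an argument that $a\neq 0$ for a general hyperbolic $h$; you only describe a symbolic computation that one could attempt for any fixed $h$ and hope does not vanish. That is not a proof of Theorem~\ref{t.A} as stated, which is a claim about \emph{every} hyperbolic element. The paper in fact remarks that verifying the twist condition uniformly is not straightforward, in particular because it fails at the limiting sphere of directions $S_{-2}$, so there is no continuity-from-the-boundary shortcut.

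The paper's route is genuinely different on two counts. First, instead of hunting for a single periodic point case by case, it analyzes the one-dimensional algebraic set $L_h$ of fixed points and, via the blow-up at $\kappa^{-1}(-2)$, shows that along some irreducible component the multiplier $\lambda$ varies continuously from a root of unity of order $\leq 4$ (coming from the octahedral action on $S_{-2}$) toward the hyperbolic spectrum at $\kappa^{-1}(2)$; hence $\lambda$ sweeps an open arc of $\U(1)$ and in particular hits Brjuno values. This yields, for every hyperbolic $h$, a Brjuno elliptic periodic point of period one or two (Theorem~\ref{t.Bruno}). Second, and this is the key idea you are missing, the paper then invokes R\"ussmann's stability theorem (Theorem~\ref{t.Russmann}) for real-analytic area-preserving maps: a Brjuno elliptic fixed point is automatically stable, with no twist hypothesis whatsoever. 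This completely sidesteps the verification of $a\neq 0$ and is what converts Brown's incomplete argument into a proof. Your Moser-twist approach can be made to work for specific $h$ (the paper carries this out for the cat map in Section~\ref{su2kam} via explicit computation of $\alpha_2$), but it does not give the uniform statement.
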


Roughly speaking, Brown establishes Theorem \ref{t.A} along the following lines. One starts by performing a blow up at the origin $\kappa^{-1}(-2)=\{(0,0,0)\}$ in order to think of the action of $h$ on $\X_{1,1}(\SU(2))$ as a one-parameter family $h_{(\ell)}$, $-2\leq \ell \leq 2$, of area-preserving maps of the $2$-sphere such that $h_{(-2)}$ is a finite order element of $\SO(3)$. In this way, we have that $h_{(\ell)}$ is a non-trivial one-parameter family going from completely elliptic behaviour at $\ell=-2$ to non-uniformly hyperbolic behaviour at $\ell=2$. This scenario suggests that the conclusion of Theorem \ref{t.A} can be derived via KAM theory in the elliptic regime. In the sequel, we revisit Brown's ideas leading to Theorem \ref{t.A} (with an special emphasis on its KAM theoretical aspects). 

\subsection{Blow up of the origin}  The origin $\kappa^{-1}(-2)$ of the character variety $\X_{1,1}(\SU(2))$ can be blown up into a sphere of directions $S_{-2}$. The action of $\SL(2,\mathbb{Z})$ on $S_{-2}$ factors through an octahedral subgroup of $\SO(3)$: this follows from the fact that \eqref{e.3} implies that the generators $\tau_{\alpha}$ and $\tau_{\beta}$ of $\SL(2,\mathbb{Z})$ act on $S_{-2}$ as
$$\tau_{\alpha}|_{S_{-2}}(\dot{x},\dot{y},\dot{z}) = (\dot{x},\dot{z},-\dot{y}), \quad \tau_{\beta}^{-1}|_{S_{-2}}(\dot{x},\dot{y},\dot{z})=(-\dot{z},\dot{y},\dot{x}).$$ 
In this way, each element $h\in \SL(2,\mathbb{Z})$ is related to a root of unity 
$$\lambda_{-2}(h)\in \U(1)=\{w\in \mathbb{C}\ |\ |w|=1\}$$ 
of order $\leq 4$ coming from the eigenvalues of the derivative of $h|_{S_{-2}}$ at any of its fixed points.  

\begin{example} The hyperbolic element $\left(\begin{array}{cc} 2 & 1 \\ 1 & 1\end{array}\right) = \tau_{\alpha}\tau_{\beta}$ acts on $S_{-2}$ via the element $(\dot{x},\dot{y},\dot{z})\mapsto (\dot{z},-\dot{x},-\dot{y})$ of $\SO(3)$ of order $3$. 
\end{example} 

\subsection{Bifurcations of fixed points} A hyperbolic element $h\in \SL(2,\mathbb{Z})$ induces a non-trivial polynomial automorphism of $\mathbb{R}^3$ whose restriction to $\kappa^{-1}([-2,2])$ describe the action of $h$ on $\X(F_2,\SU(2))$. In particular, the set $L_h$ of fixed points of this polynomial automorphism in $\kappa^{-1}([-2,2])$ is a semi-algebraic set of dimension less than $ 3$. 

One can exploit the fact that $h$ acts on the level sets $\kappa^{-1}(\ell)$, $\ell\in[-2,2]$, via area-preserving maps, to compute the Zariski tangent space to $L_{h}$ in order to verify that $L_h$ is one-dimensional (\cite[Proposition 5.1]{Brown98}). 

Moreover, this calculation of the Zariski tangent space can be combined with the fact that any hyperbolic element $h\in \SL(2,\mathbb{Z})$ has a discrete set of fixed points in $\mathbb{R}^2/\mathbb{Z}^2$ and, a fortiori, in $\kappa^{-1}(2)=\X(\Z^2,\SU(2))$ to get that $L_h$ is transverse to $\kappa$ except at its discrete subset of singular points and, hence, $L_h\cap \kappa^{-1}(\ell)$ is discrete for all $-2\leq \ell\leq 2$ (\cite[Proposition 5.2]{Brown98}). 

\begin{example}  The hyperbolic element $\tau_{\alpha}\tau_{\beta}=\left(\begin{array}{cc} 2 & 1 \\ 1 & 1\end{array}\right)$ acts on the character variety $\X(F_2,\SU(2))$ via the polynomial automorphism 
$$(x,y,z)\mapsto (z, zy-x, z(zy-x)-y)$$ by Equation \eqref{e.3}. Thus, the corresponding set of fixed points is given by the equations  $$x=z, \quad y=zy-x, \quad z=z(zy-x)-y$$  describing an embedded curve in $\mathbb{R}^3$. 
\end{example} 

In general, the eigenvalues $\{\lambda(p), \lambda(p)^{-1}\}$ of the derivative at $p\in L_h$ of the action of a hyperbolic element $h\in \SL(2, \mathbb{Z})$ on $\kappa^{-1}(\kappa(p))$ can be continuously followed along any irreducible component $\mathcal{C}_h\ni p$ of $L_h$. 

Furthermore, it is not hard to check that $\lambda$ is not constant on $\mathcal{C}_h$ (\cite[Lemma 5.3]{Brown98}). Indeed, this happens because there are only two cases: the first possibility is that $\mathcal{C}_h$ connects $\kappa^{-1}(-2)$ and $\kappa^{-1}(2)$ so that $\lambda$ varies from $\lambda_{-2}(h)\in \U(1)$ to the unstable eigenvalue of $h$ acting on $\mathbb{R}^2/\mathbb{Z}^2$; the second possibility is that $\mathcal{C}_h$ becomes tangent to $\kappa^{-1}(\ell)$ for some $-2<\ell<2$ so that the Zariski tangent space computation mentioned above reveals that $\lambda$ varies from $1$ (at $\mathcal{C}_h\cap\kappa^{-1}(\ell)$) to some value $\neq 1$ (at any point of transverse intersection between $\mathcal{C}_h$ and a level set of $\kappa$). 

\subsection{Detecting Brjuno elliptic periodic points}

The discussion of the previous two subsections allows us to show that some aspects of the action of a hyperbolic element $h\in \SL(2,\mathbb{Z})$ fit the assumptions of KAM theory. Before entering into this matter, recall that $e^{2\pi i\theta}\in \U(1)$ is {\it Brjuno} whenever $\theta$ is an irrational number whose continued fraction has partial convergents $(p_k/q_k)_{k\in\mathbb{Z}}$ satisfying: 
$$\sum\limits_{k=1}^{\infty} \frac{\log q_{k+1}}{q_k}<\infty.$$ 
For our purposes, it is important to note that the Brjuno condition has full Lebesgue measure on $\U(1)$. Indeed, the set of Diophantine irrationals is a proper subset of the set of Brjuno irrationals
and it is well known that already the set of Roth Diophantine irrationals has full measure.  We recall that by definition $\theta$ is a Diophantine irrational of exponent $\gamma>0$ if  $q_{k+1} = O(q_k^{1+\gamma})$ 
for all $k \in N$  and  it is a Roth irrational if it is Diophantine of exponent $\gamma$ for all $\gamma >0$.

Let $h\in \SL(2,\mathbb{Z})$ be a hyperbolic element. We have three possibilities for the limiting eigenvalue $\lambda_{-2}(h)\in \U(1)$: it is not real, it equals $1$ or it equals $-1$.

If the limiting eigenvalue $\lambda_{-2}(h)\in \U(1)$ is not real, then we take an irreducible component $\mathcal{C}_h$ intersecting the origin $\kappa^{-1}(-2)$. Since $\lambda$ is not constant on $\mathcal{C}_h$ we conclude that $\lambda(\mathcal{C}_h)$ contains an open subset of $\U(1)$. Thus, we can find some $-2<\ell<2$ such that $\{p\}=\mathcal{C}_h\cap \kappa^{-1}(\ell)$ has a Brjuno eigenvalue $\lambda(p)$, i.e., the action of $h$ on $\kappa^{-1}(\ell)$ has a Brjuno fixed point. 

If the limiting eigenvalue is $\lambda_{-2}(h)=1$, we use the Lefschetz fixed point theorem on the sphere $\kappa^{-1}(\ell)$ with $\ell$ close to $-2$ to locate an irreducible component $\mathcal{C}_h$ of $L_h$ such that $\{p_\ell\}=\mathcal{C}_h\cap\kappa^{-1}(\ell)$ is a fixed point of positive index of $h|_{\kappa^{-1}(\ell)}$ for $\ell$ close to $-2$. On the other hand, it is known that an isolated fixed point of an orientation-preserving surface homeomorphism which preserves area has index $<2$. Therefore, $p_\ell$ is a fixed point of $h|_{\kappa^{-1}(\ell)}$ of index $1$ with multipliers $\lambda(p_\ell), \lambda(p_k)^{-1}$ close to $1$ whenever $k$ is close to $-2$. Since a hyperbolic fixed point with positive multipliers has index $-1$, it follows that $p_\ell$ is a fixed point with $\lambda(p_\ell)\in \U(1)\setminus\{1\}$ when $\ell$ is close to $-2$. In particular, $\lambda(\mathcal{C}_h)$ contains an open subset of $\U(1)$ and, hence, we can find some $-2<\ell<2$ such that $p_\ell$ has a Brjuno multiplier $\lambda(p_\ell)$. 

If the limiting eigenvalue is $\lambda_{-2}(h)=-1$, then $h^2$ is a hyperbolic element with limiting eigenvalue $\lambda_{-2}(h^2)=1$. From the previous paragraph, it follows that we can find some $-2<\ell<2$ such that $\kappa^{-1}(\ell)$ contains a Brjuno elliptic fixed point of $h^2|_{\kappa^{-1}(\ell)}$. 

In any event, the arguments above give the following result (\cite[Theorem 4.4]{Brown98}): 

\begin{theorem}\label{t.Bruno} Let $h\in \SL(2,\mathbb{Z})$ be a hyperbolic element. Then, there exists $-2<\ell<2$ such that $h|_{\kappa^{-1}(\ell)}$ has a periodic point of period one or two with a Brjuno multiplier. 
\end{theorem}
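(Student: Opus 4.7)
The plan is to split on the value of the limiting eigenvalue $\lambda_{-2}(h)\in \U(1)$ obtained from the action of $h$ on the blow-up sphere $S_{-2}$, which the previous subsection shows lies in a finite subset of $\U(1)$ of order dividing $4$. Since $h$ is hyperbolic, there are three cases to treat: $\lambda_{-2}(h)\notin \R$, $\lambda_{-2}(h)=1$, and $\lambda_{-2}(h)=-1$. In each case I will locate an irreducible component $\mathcal{C}_h$ of the fixed-point locus $L_h$ along which the multiplier function $\lambda$ sweeps out an open arc of $\U(1)$; since Brjuno numbers have full Lebesgue measure in $\U(1)$, such an arc contains some Brjuno $\lambda(p)$ with $p=\mathcal{C}_h\cap \kappa^{-1}(\ell)$, $-2<\ell<2$, proving the theorem.

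The first case is the easiest: I pick an irreducible component $\mathcal{C}_h$ of $L_h$ passing through the origin $\kappa^{-1}(-2)$, which exists because $h|_{S_{-2}}$ is a finite-order rotation and so has fixed points. By the non-constancy of $\lambda$ on $\mathcal{C}_h$ established above, and by continuity, $\lambda(\mathcal{C}_h)$ contains an open neighborhood of $\lambda_{-2}(h)\in \U(1)\setminus\R$, and a Brjuno value is selected within this neighborhood.

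The main obstacle is the case $\lambda_{-2}(h)=1$, where the limit is parabolic and could a priori bifurcate toward either elliptic or hyperbolic behaviour as $\ell$ moves above $-2$. Here I would combine two ingredients: first, the Lefschetz fixed point theorem on the topological sphere $\kappa^{-1}(\ell)$ produces, for $\ell$ slightly above $-2$, a fixed point $p_\ell$ lying on some component $\mathcal{C}_h$ of $L_h$ and of positive Lefschetz index; second, the area-preserving character of $h|_{\kappa^{-1}(\ell)}$ forces any isolated fixed point to have index strictly less than $2$, while a hyperbolic fixed point with positive real multipliers has index $-1$. Taken together, these obstructions pin the index of $p_\ell$ to $+1$ and force $\lambda(p_\ell)\in \U(1)\setminus\{1\}$ near $1$, so once again $\lambda(\mathcal{C}_h)$ contains an open arc of $\U(1)$ and a Brjuno parameter is found.

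The third case $\lambda_{-2}(h)=-1$ reduces immediately to the second: the square $h^2\in \SL(2,\Z)$ is still hyperbolic, its limiting eigenvalue on $S_{-2}$ is $(-1)^2=1$, and the previous case applied to $h^2$ produces a Brjuno elliptic fixed point of $h^2|_{\kappa^{-1}(\ell)}$, which is a point of period one or two for $h$ itself. Assembling the three cases concludes the proof.
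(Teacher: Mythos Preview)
Your proposal is correct and follows essentially the same approach as the paper's proof: the identical three-way split on $\lambda_{-2}(h)$, the same Lefschetz/index argument in the case $\lambda_{-2}(h)=1$, and the same reduction to $h^2$ when $\lambda_{-2}(h)=-1$. The only cosmetic difference is that you add the brief justification that $h|_{S_{-2}}$, being a finite-order rotation, has fixed points, whereas the paper leaves this implicit.
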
 

\begin{remark} 
It is natural to ask if one can avoid the above non-constructive existence argument by explicitly computing a Brjuno fixed point given our concrete setting.  However, we expect (after unsuccessful attempts) that such a computation is rather difficult and lengthy. One reason to expect this is that a way of finding a Brjuno multiplier is to look for algebraic multipliers (by Roth's Theorem \cite{Roth}), but deciding whether analytic functions take algebraic values is, generally speaking, a difficult problem.
\end{remark}

\subsection{Moser's twist theorem \& R\"ussmann's stability theorem} At this point, the idea to derive Theorem \ref{t.A} is to combine Theorem \ref{t.Bruno} with KAM theory ensuring the stability of certain types of elliptic periodic points.  

Recall that a periodic point is called \emph{stable} whenever there are arbitrarily small neighborhoods of its orbit which are invariant. In particular, the presence of a stable periodic point implies the non-ergodicity of an area-preserving map. 

A famous stability criterion for fixed points of area-preserving maps is Moser's twist theorem \cite{Moser73}. This result can be stated as follows. Suppose that $f$ is an area-preserving $C^r$-map, $r\geq 4$, having an elliptic fixed point at the origin $(0,0)\in \mathbb{R}^2$ with multipliers $\{e^{2\pi i\theta}$, $e^{-2\pi i\theta}\}$ such that $n\theta\notin\mathbb{Z}$ for $n=1, 2, 3, \dots, r$. After performing an appropriate area-preserving change of variables (tangent to the identity at the origin), one can put $f$ into its \emph{Birkhoff normal form}, i.e., $f$ has the form: 
$$\left(\begin{array}{c}\xi \\ \eta\end{array}\right)\!\! \mapsto\!\! \left(\begin{array}{c} \xi\cos(\sum\limits_{n=0}^s\gamma_n(\xi^2+\eta^2)^n)-\eta\sin(\sum\limits_{n=0}^s \gamma_n(\xi^2+\eta^2)^n) \\ 
\xi\sin(\sum\limits_{n=0}^s \gamma_n(\xi^2+\eta^2)^n)+\eta\cos(\sum\limits_{n=0}^s \gamma_n(\xi^2+\eta^2)^n)\end{array}\right) + h(\xi,\eta)$$ 
where $s=[r/2]-1$, $\gamma_0=2\pi\theta$, $\gamma_1, \dots, \gamma_s$ are uniquely determined Birkhoff invariants and $h(\xi,\eta)$ denotes higher order terms.   In Appendices \ref{appa} and \ref{appb} we will discuss the Birkhoff normal form and its relation to KAM theory in more detail.

\begin{theorem}[Moser twist theorem] Let $f$ be an area-preserving map as in the previous paragraph. If $\gamma_n\neq0$ for some $1\leq n\leq s$, then the origin $(0,0)\in\mathbb{R}^2$ is a stable fixed point. 
\end{theorem}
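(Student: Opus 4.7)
The plan is to exploit the Birkhoff normal form to present $f$ near the origin as a small perturbation of an integrable twist map, and then invoke Moser's invariant curve theorem to produce a fundamental system of $f$-invariant neighborhoods of $(0,0)$.

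First, I would pass to symplectic polar coordinates $(I,\phi)$ on a punctured neighborhood of the origin via $\xi=\sqrt{2I}\cos\phi$, $\eta=\sqrt{2I}\sin\phi$. Setting $\alpha(I):=\sum_{n=0}^{s}\gamma_n (2I)^n$ and transcribing the Birkhoff normal form into these coordinates, $f$ takes the form
\[
(I,\phi)\mapsto \bigl(I+R_1(I,\phi),\ \phi+\alpha(I)+R_2(I,\phi)\bigr),
\]
where the remainder $(R_1,R_2)$ comes from the higher order term $h(\xi,\eta)$ and vanishes to high order in $I$. The hypothesis that some $\gamma_n\neq 0$ for $1\leq n\leq s$ implies, taking the first such index $n_0$, that $\alpha'(I)=2n_0\gamma_{n_0}(2I)^{n_0-1}+o(I^{n_0-1})$, so $\alpha$ is strictly monotone on some interval $(0,I_0]$ with derivative bounded away from zero on compact sub-intervals. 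This is the quantitative \emph{twist condition} demanded by KAM theory.

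Next, I would apply Moser's invariant curve theorem to $f$ restricted to a thin annulus $\{I_1\leq I\leq I_2\}\subset(0,I_0]$. Rescaling so that $\alpha$ plays the role of the integrable rotation profile, the high-order vanishing of $(R_1,R_2)$ makes the perturbation arbitrarily small (in the $C^{r-1}$-norm required by Moser's theorem) once $I_2$ is chosen sufficiently small. Moser's theorem then produces, for a positive-measure set of radii $I\in[I_1,I_2]$ whose rotation numbers $\alpha(I)$ satisfy a Diophantine condition, invariant Jordan curves for $f$ that are $C^{r-2}$-close to the circles $\{I=\text{const}\}$. Letting $I_2\to 0$ yields such invariant curves arbitrarily close to the origin.

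Finally, each such invariant curve encloses the origin, and by $f$-invariance together with area preservation the closed topological disk it bounds is itself $f$-invariant. The existence of invariant disks of arbitrarily small diameter provides a fundamental system of $f$-invariant neighborhoods of $(0,0)$, establishing stability. The principal technical obstacle is verifying the precise hypotheses of Moser's invariant curve theorem in the appropriate function space: one must track the differentiability class (hence the assumption $r\geq 4$ and the cutoff $s=[r/2]-1$), quantify the perturbation size after the polar change of coordinates, and confirm that the non-resonance conditions $n\theta\notin\mathbb{Z}$ for $n\leq r$ suffice to produce the Birkhoff normal form used above. Once Moser's theorem is applied as a black box, the geometric conclusion is immediate; the detailed verification of its hypotheses is postponed to Appendices~\ref{appa} and \ref{appb}.
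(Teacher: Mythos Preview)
The paper does not prove this theorem at all: it is quoted as a classical result from Moser's book \cite{Moser73} and used as a black box. There is therefore no ``paper's own proof'' to compare against.

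Your outline is the standard route from Moser's invariant curve theorem to the twist-theorem statement, and as a sketch it is fine: pass to action--angle coordinates, identify the twist from the first nonzero $\gamma_{n_0}$, and invoke the invariant curve theorem on shrinking annuli to produce invariant circles bounding invariant disks. Two corrections are in order. First, your final sentence is misleading: Appendices~\ref{appa} and~\ref{appb} in this paper are devoted to \emph{computing} Birkhoff invariants (the coefficients $\alpha_2$, $\alpha_{jk}$) for the specific maps at hand; they do not verify the hypotheses of Moser's invariant curve theorem, and you should not point the reader there for that. Second, a small technical point: your expansion $\alpha'(I)=2n_0\gamma_{n_0}(2I)^{n_0-1}+o(I^{n_0-1})$ shows that when $n_0>1$ the derivative $\alpha'(I)$ tends to $0$ as $I\to 0$, so it is not ``bounded away from zero on compact sub-intervals'' of $(0,I_0]$ in any uniform sense near the origin. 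What one actually uses is that on each fixed annulus $[I_1,I_2]\subset(0,I_0]$ the twist $\alpha'$ is bounded below by a positive constant depending on $I_1$, and the perturbation size (of order $I^{s+1}$ in action) beats that lower bound after rescaling; the order-of-limits here matters and should be stated carefully.
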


The nomenclature ``twist map'' comes from the condition that the ``vertical'' lines $\theta= \theta_0$, for constant $\theta_0$, in an annulus
$(0, r_0] \times S^1$ are sent by the map to lines that ``twist'' always in the positive (or negative) direction around the annulus.  When $f$  has the form $f(r, \theta) = (r, \theta+\mu(r))$ in polar coordinates, where $\mu$ is a smooth function, then $f$ is a twist map on $(0, r_0] \times S^1$ if and only if $\vert \mu'(r) \vert \not =0$ for all $r\in (0, r_0]$. In particular, if $\gamma_n\not=0$ for some $n>0$ (called a {\it twist condition}), then there exists $r_0>0$ such that $f$ is a twist map in the annulus $(0, r_0] \times S^1$  (see for instance  \S 4  in  \cite{MaFo}).

\begin{example} The Dehn twist $\tau_{\alpha}$ induces the polynomial automorphism $\tau_{\alpha}(x,y,z)= (x,z,xz-y)$ on $\X(F_2, \SU(2))=\kappa^{-1}([-2,2])$. Each level set $\kappa^{-1}(\ell)$, $-2<\ell<2$, is a smooth $2$-sphere which is swept out by the $\tau_{\alpha}$-invariant ellipses  $\mathcal{E}_{\ell, x_0}$ obtained from the intersections between $\kappa^{-1}(\ell)$ and the planes of the form $\{x_0\}\times \mathbb{R}^2$. Goldman \cite{Gold6} observed that, after an appropriate change of coordinates, each $\mathcal{E}_{\ell, x_0}$ becomes a circle where $\tau_{\alpha}$ acts as a rotation by angle $\cos^{-1}(x_0/2)$. In particular, the restriction of $\tau_{\alpha}$ to each level set $\kappa^{-1}(\ell)$ is a twist map near its fixed points $(\pm\sqrt{2+\ell},0,0)$. 
\end{example} 

In his original argument, Brown \cite{Brown98} deduced Theorem \ref{t.A} from (a weaker version of) Theorem \ref{t.Bruno} and Moser's twist theorem. However, Brown employed Moser's theorem with $r=4$ while checking only the conditions on the multipliers of the elliptic fixed point but not the twist condition $\gamma_1\neq 0$.

As it turns out, it is not obvious to check the twist condition in Brown's setting (especially because it is not satisfied at the sphere of directions $S_{-2}$). We note that for the cat map (defined in the next section) we do explicitly check the twist condition in Section \ref{su2kam}.

Fortunately, R\"ussmann \cite{R02} discovered that a Brjuno elliptic fixed point of a real-analytic area-preserving map is always stable (independently of the twist conditions):

\begin{theorem}[R\"ussmann]\label{t.Russmann} Any Brjuno elliptic periodic point of a real-analytic area-preserving map is stable.  
\end{theorem}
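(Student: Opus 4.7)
The plan is to produce, for every $\varepsilon > 0$, an $f$-invariant topological disk $D_\varepsilon$ containing the given periodic point and contained in the ball of radius $\varepsilon$ around it; such a nested family of invariant disks is precisely Lyapunov stability. After replacing $f$ by a suitable iterate I may assume the Brjuno periodic point is a fixed point, and after a translation place it at the origin of $\R^2$, with multipliers $\{e^{2\pi i\theta}, e^{-2\pi i\theta}\}$ and $\theta$ Brjuno. The first main step is to put $f$ into a Birkhoff normal form of arbitrarily high order: by a real-analytic, area-preserving change of variables tangent to the identity (permissible because the Brjuno hypothesis a fortiori rules out every resonance $n\theta \in \Z$, $n \geq 1$), $f$ becomes, in coordinates $(\xi,\eta)$ with $\rho = \xi^2 + \eta^2$, a rotation by the angle function $\alpha(\rho) = 2\pi\theta + \gamma_1\rho + \dots + \gamma_s\rho^s$ composed with a real-analytic remainder of arbitrarily high order in $\rho$.

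Two cases now arise. In the non-degenerate case, some Birkhoff coefficient $\gamma_n \neq 0$, so for small $\rho$ the principal part of $f$ is a genuine twist map; I would then invoke a Brjuno refinement of Moser's twist theorem --- in which the standard Diophantine small-divisor bounds are replaced by bounds using the Brjuno sum $\sum_k \log(q_{k+1})/q_k$ associated to the continued fraction of $\theta$ --- to produce a Cantor family of $f$-invariant analytic circles accumulating at $0$, each bounding an invariant disk. In the degenerate case every $\gamma_n$ vanishes and no twist survives; here the strategy is to show that the Brjuno hypothesis on $\theta$ alone is strong enough to \emph{analytically linearize} $f$ near $0$. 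I would run a Newton-type KAM iteration in which each step solves a cohomological equation of the form $u \circ R_\theta - u = g$ on the circle; the Fourier solution carries small divisors $e^{2\pi i k\theta} - 1$, and the Brjuno summability $\sum_k \log(q_{k+1})/q_k < \infty$ is exactly what is required to keep the total loss of the analyticity strip bounded and the scheme convergent. The resulting conjugacy realizes $f$ as the rigid rotation $R_\theta$ in a neighborhood of $0$, all of whose small round disks are invariant.

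The hard part in both cases is the small-divisor bookkeeping: one must couple the truncation orders $N_j$ of the KAM iteration to the continued-fraction denominators $q_k$ of $\theta$ in such a way that the Brjuno sum $\sum_k \log(q_{k+1})/q_k$, rather than a Diophantine power law, controls the cumulative loss of the analyticity strip at each step. This refined estimate is precisely what allows Moser's hypotheses --- either a non-degenerate twist or a Diophantine multiplier --- to be relaxed to the Brjuno condition alone, and it is where essentially all of the technical work in R\"ussmann's proof is concentrated; the rest of the argument is a perturbative Newton iteration of the now-standard KAM type.
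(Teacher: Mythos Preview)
The paper does not prove this theorem: it is quoted from R\"ussmann's original paper \cite{R02} and used as a black box to complete the proof of Theorem~\ref{t.A}. The only commentary the paper offers is the remark immediately following the statement, recording that R\"ussmann's argument hinges on showing that a real-analytic area-preserving map with a Brjuno elliptic fixed point and \emph{all} Birkhoff invariants vanishing is analytically linearizable.

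Your sketch is consistent with that remark and captures the correct dichotomy. One small correction: in your non-degenerate case (some $\gamma_n \neq 0$) you do not need a ``Brjuno refinement'' of Moser's twist theorem --- the classical version (stated just above in the paper) already suffices, since the twist condition is precisely $\gamma_n \neq 0$ and the irrationality of $\theta$ (implied by Brjuno) is enough to clear the finitely many non-resonance conditions needed to compute the Birkhoff normal form to the relevant order. The Brjuno hypothesis only does real work in your degenerate case, where it replaces the Diophantine small-divisor bounds in the linearization KAM scheme, exactly as you describe. That degenerate case is the entire content of R\"ussmann's contribution, and your outline of it --- Newton iteration, cohomological equations $u\circ R_\theta - u = g$, and coupling truncation orders to the continued-fraction denominators $q_k$ so that the Brjuno sum $\sum_k \log(q_{k+1})/q_k$ controls the cumulative loss of analyticity --- is an accurate high-level summary of what \cite{R02} carries out in detail.
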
 

\begin{remark} R\"ussmann obtained the previous result by showing that a real-analytic area-preserving map with a Brjuno elliptic fixed point and vanishing Birkhoff invariants (i.e., $\gamma_n=0$ for all $n\in\mathbb{N}$) is analytically linearizable. Note that the analogue of this statement is false in the $C^{\infty}$ category (as a counterexample is given by $(r,\ \theta)\mapsto (r,\ \theta+\rho+e^{-1/r})$). 
\end{remark} 

Consequently, at this stage, the proof of Theorem \ref{t.A} is complete: it suffices to put together Theorems \ref{t.Bruno} and \ref{t.Russmann}. 

\section{Fixed points of the cat map}\label{sec-5}

Let $\Sigma_{1,1}$ be  a surface of genus $1$ with a boundary component $\gamma$. Recall its fundamental group is free of rank 2: $F_2=\langle a,b\rangle$.

The {\it cat map} is $M=\left(\begin{array}{cc}2& 1\\ 1&1\end{array}\right)=\tau_{\alpha}\tau_{\beta}$, where 
$$\tau_{\alpha}=\left(\begin{array}{cc}1& 1\\ 0&1\end{array}\right) \textrm{ and } \tau_{\beta} = \left(\begin{array}{cc}1& 0\\ 1&1\end{array}\right)$$ are the Dehn twists about the curves $\alpha, \beta$ in $\Sigma_{1,1}$ corresponding to $a,b$ respectively.

Also recall that  
$$\tau_{\alpha}(\alpha)=\alpha, \quad \tau_{\alpha}(\beta)=\beta\alpha, \quad \tau_{\beta}(\alpha) = \alpha\beta, \quad \tau_{\beta}(\beta)=\beta.$$ 

\subsection{Fixed points on $\X(F_2, \SU(2))$}
In trace coordinates $x=\textrm{tr}(\rho(a))$, $y=\textrm{tr}(\rho(b))$ and $z=\textrm{tr}(\rho(ab))$, the $\SU(2)$-character variety of $\Sigma_{1,1}$ is described as:
$$\X(F_2, \SU(2)) \cong \{(x,y,z)\in[-2,2]^3\ |\ -2\leq x^2+y^2+z^2-xyz-2\leq 2\}.$$

Since $\tau_{\alpha}(x,y,z)=(x,z,xz-y)$ and $\tau_{\beta}^{-1}(x,y,z)=(xy-z,y,x)$, we see that 
$$M(x,y,z)=(z, zy-x, z(zy-x)-y).$$

Therefore, the fixed point locus of $M$ on $\X(F_2, \SU(2))$ is described by the equations 
$$x = z, \quad y = zy -x, \quad z = z(zy-x)-y$$ 
in trace coordinates (only the first two equations are needed).

The intersection of the fixed point formulae above with $\X(F_2, \SU(2))$ describes a curve passing through the origin $(0,0,0)$ and the point $(2,2,2)$: this can be seen using {\it Mathematica}. 

The matrix counterparts of these fixed points can be easily found along the following lines. Let $\rho(a)=A\in \SU(2)$ and $\rho(b)=B\in \SU(2)$ such that $x=z$ and $y=zy-x$. Firstly, we can simultaneously conjugate $A$ and $B$ to write them as 
$$A = \left(\begin{array}{cc}r& 0\\ 0&\overline{r}\end{array}\right) \quad \textrm{ and } \quad B = \left(\begin{array}{cc}u& -\overline{v}\\ v&\overline{u}\end{array}\right)$$ 
where $|r|^2=1=|u|^2+|v|^2$. Since the trace of $A$ is $x=2 \textrm{ Re}(r):=2s$, the trace of $B$ is $y=2\textrm{ Re}(u)$ and the trace of 
$$AB = \left(\begin{array}{cc}ru& -r\overline{v}\\ \overline{r} v&\overline{r} \overline{u}\end{array}\right)$$ 
is $z= 2 \textrm{ Re}(ru) = 2 (\textrm{Re}(r) \textrm{Re}(u) - \textrm{Im}(r)\textrm{Im}(u))$, the fixed point equations $x=z$ and $y=zy-x$ become 
$$\textrm{ Re}(r) = \textrm{Re}(r) \textrm{Re}(u) - \textrm{Im}(r)\textrm{Im}(u)$$ 
and 
$$\textrm{ Re}(u) = 2\textrm{Re}(r)\textrm{ Re}(u) - \textrm{Re}(r).$$ 

Hence, if $s=\textrm{Re}(r) \neq 1/2$ and $\textrm{Im}(r)\neq 0$, then we can write 
$$\textrm{ Re}(u) = s/(2s-1)$$ 
and  
$$\textrm{Im}(u) = \frac{s}{\textrm{Im}(r)}\left(\frac{s}{2s-1}-1\right) = \frac{s}{\textrm{Im}(r)}\cdot\frac{1-s}{2s-1}.$$ 

Since the roles of $r$ and $\overline{r}$ can be exchanged and $|r|^2=1$, we can assume that $\textrm{Im}(r)=\sqrt{1-s^2}$. Also, it seems that $v$ can be taken arbitrarily within the circle $|v|^2 = 1-|u|^2$, so that we will choose $v=\sqrt{1-|u|^2}$. 

In this way, we get that 
$$A(s):=  \left(\begin{array}{cc}s+i\sqrt{1-s^2}& 0\\ 0&s-i\sqrt{1-s^2}\end{array}\right)$$ 
 and $B(s):=$

$$\left(\begin{array}{cc}\frac{s}{2s-1} + i \frac{s(1-s)}{(2s-1)\sqrt{1-s^2}}& -\sqrt{1-(\frac{s}{2s-1})^2-(\frac{s(1-s)}{(2s-1)\sqrt{1-s^2}})^2}\\ \sqrt{1-(\frac{s}{2s-1})^2-(\frac{s(1-s)}{(2s-1)\sqrt{1-s^2}})^2}&\frac{s}{2s-1} - i \frac{s(1-s)}{(2s-1)\sqrt{1-s^2}}\end{array}\right)$$
$$=\left(\begin{array}{cc}\frac{s}{2s-1} + i \frac{s(1-s)}{(2s-1)\sqrt{1-s^2}}& -\sqrt{1-\frac{2s^2}{4s^3-3s+1}}\\ \sqrt{1-\frac{2s^2}{4s^3-3s+1}}&\frac{s}{2s-1} - i \frac{s(1-s)}{(2s-1)\sqrt{1-s^2}}\end{array}\right),$$
so that 
$$\rho_s(a)=A(s), \quad \rho_s(b)=B(s)$$ 
describe a typical fixed point of the action of the cat map $M$ on the character variety $\X(F_2, \SU(2))$. 

\begin{remark} The trace of $[A(s), B(s)]$ is $\frac{2 (8 s^4 - 12 s^3 + 2 s^2 + 4 s -1)}{(1 - 2 s)^2}$. 
\end{remark}

Note that the fixed point at the origin $(0,0,0)$ in trace coordinates corresponds to the Pauli matrices 
$$A(0)=\left(\begin{array}{cc}i& 0\\ 0&-i\end{array}\right) \quad \textrm{and} \quad B(0)=\left(\begin{array}{cc}0& -1\\ 1&0\end{array}\right)$$ 
at $s=0$ and the parameter interval $s\in [-0.5405\dots, 0.2597\dots]$ around $s=0$ is interesting (as we will see). 

\subsection{Fixed points on $\X(F_2, \SU(3))$}

The second symmetric power 
$$\left(\begin{array}{cc}a& b\\ c&d\end{array}\right)^{\otimes 2} := \left(\begin{array}{ccc}a^2& ab & b^2\\ 2ac&ad+bc&2bd \\ c^2&cd&d^2\end{array}\right)$$ 
produces a family $\rho_s^{\otimes 2}$ of fixed points of the action of the cat map $M$ on $\X(F_2, \SU(3))$. 

The trace of $[A(s)^{\otimes 2}, B(s)^{\otimes 2}]$ is 
$$\frac{(256 s^8 - 768 s^7 + 704 s^6 + 64 s^5 - 448 s^4 + 192 s^3 + 24 s^2 - 24 s + 3)}{(1 - 2 s)^4}.$$

\begin{figure}[ht!]
\includegraphics[scale=0.4]{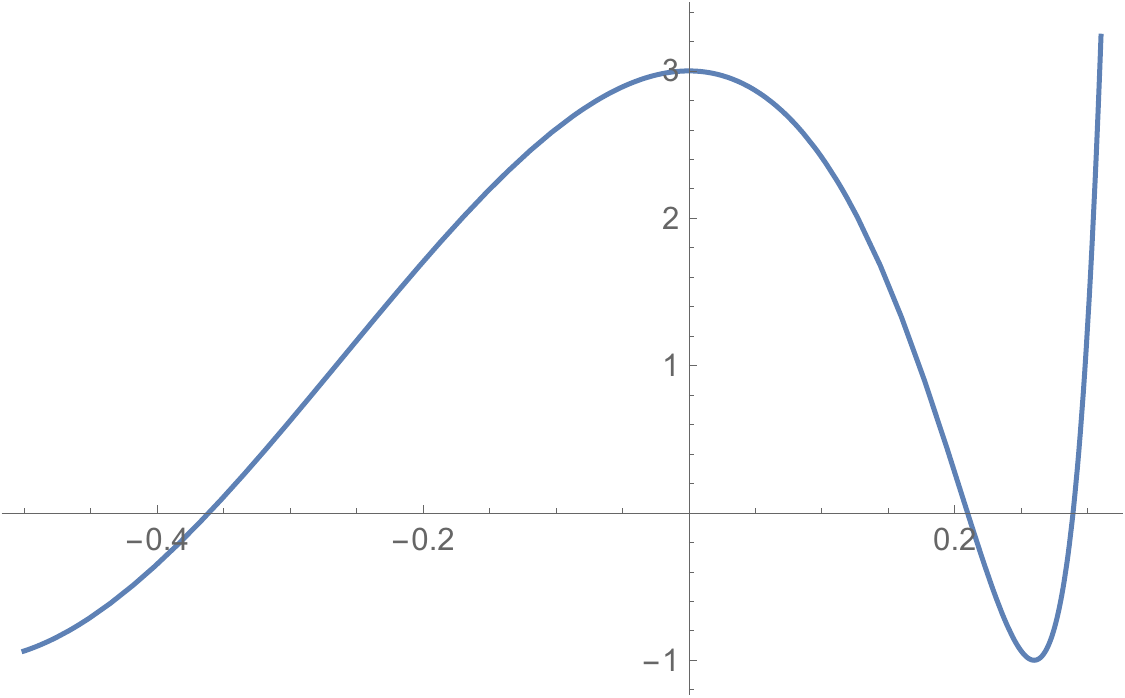} \caption{$\tr\left([A(s)^{\otimes 2}, B(s)^{\otimes 2}]\right)$}
\label{commgraph} 
\end{figure}

This real-valued function (Figure \ref{commgraph}) has a local extreme value $3$ at $s=0$. It increases monotonically on $[-0.5405\dots, 0]$ and decreases monotonically on $[0, 0.2597\dots]$ to attain the value $-1$ at the non-zero extremities of these intervals. In other words, the family $\rho_s^{\otimes 2}$ shows that the fixed points of the action of the cat map on $\X(F_2, \SU(3))$ cover the whole interval $\Delta\cap\R=[-1, 3].$

The matrices 
$$A(0)^{\otimes 2} = \left(\begin{array}{ccc}-1& 0 & 0\\ 0&1&0 \\ 0&0&-1\end{array}\right)$$ 
and 
$$B(0)^{\otimes 2} = \left(\begin{array}{ccc}0& 0 & 1\\ 0&-1&0 \\ 1&0&0\end{array}\right)$$ 
generate a \emph{reducible} representation $\rho_0^{\otimes 2}$ in the level set $$\mathfrak{L}_{(3,0)}\cong T^2/W\cong\C\mathrm{P}^2,$$ where $T\subset \SU(3)$ is a maximal torus and $W$ is the Weyl group.

\begin{remark}
In \cite{BL} it is shown that any hyperbolic element will act ergodically on character varieties of nilpotent groups.  In particular, $M$ acts ergodically on $\mathfrak{L}_{(3,0)}\cong\C\mathrm{P}^2.$
\end{remark}

\begin{remark}
Since the cat map $M$ acts via the quotient of the diagonal action of $M$ on $T^2$, we expect the spectrum of the action of $M$ restricted to the level set containing $\rho_s^{\otimes 2}$ to be close to $$\{(3+\sqrt{5})/2, (3-\sqrt{5})/2, (3+\sqrt{5})/2, (3-\sqrt{5})/2\}$$ for $s\approx 0$.  Numerical experiments have shown this in fact is the case (up to a phase shift).  It would be interesting to study the analogous spectrum for $s$ far from zero (i.e., for the trace of the commutator close to $-1$). 
\end{remark}

\begin{remark}
Numerical experiments suggest that the spectrum of $M$ varies (along $[-1,3]$) from having an interval where $M$ has an elliptic spectrum for $\ell\approx -1$ ($s\approx .25$) to having 4 hyperbolic and 2 elliptic eigenvalues for $\ell\approx 3$ ($s\approx 0$).
\end{remark}

\begin{remark}
We have checked (computationally) that there are no fixed points of the cat map $M$ along the arc of the boundary of $\Delta$ connecting $3$ to $3\omega=3e^{2\pi i/3}$ (likewise along the arc connecting $3$ to $3\overline{\omega}$).  This observation motivated our study of the spectrum of $M$ along $\Delta\cap\R=[-1,3]$.
\end{remark}

\begin{remark}
It is necessary that fixed points of $M$ on $\mathfrak{L}_{(\zeta, \eta)}$ have the form $(x,X,y,Y,x,X,y,-Y)$.  This follows by solving, with respect to the unitary coordinates discussed in Section \ref{sec-2}, the equation $$M\cdot (x,X,y,Y,z,Z,t,T)=(x,X,y,Y,z,Z,t,T)$$ where the left hand side simplifies to: $(z,Z,t - x y - X Y + y z - Y Z, T - X y + x Y + Y z + y Z, x + t z - y z - x y z - X Y z + y z^2 - T Z + X y Z - Y Z - x Y Z - 2 Y z Z - y Z^2, -X + T z - X y z - Y z + x Y z + Y z^2 + t Z + y Z - x y Z - X Y Z + 2 y z Z - Y Z^2, y, -Y).$
\end{remark}

\section{KAM theory on character varieties}\label{sec-6}

For this section denote $\X:=\X(F_2,\SU(3))$ and $\Y:=\X(F_2, \SU(2))$, and recall that $\pi_1(\Sigma_{1,1})\cong \langle a,b,c\ |\ c^{-1}=[a,b]\rangle\cong F_2$.

The natural actions of $\SL(2,\Z)$ on $\X$ and $\Y$ preserve the level sets of $\kappa_\X:\X\to \Delta$ and $\kappa_\Y:\Y\to [-2,2]$ given by $\kappa_\X([\rho])=\tr(\rho(c^{-1}))$ and $\kappa_\Y([\theta])=\tr(\theta(c^{-1}))$ respectively.  We note that in an earlier section the maps $\kappa_\X,\kappa_\Y$ were denoted $\mathfrak{b}_{1,1}$.

The level sets $\kappa_\X^{-1}(3e^{2\pi i/3})$ and $\kappa_\Y^{-1}(-2)$ are singleton sets so the action of any element in  $\SL(2,\Z)$ is trivially ergodic.  

The level sets $\kappa_\Y^{-1}(2)$ and $\kappa_\X^{-1}(3)$ are respectively $T_\Y^2/W_\Y$ and $T_\X^2/W_\X$, where $T_\Y\subset \SU(2)$ and $T_\X\subset \SU(3)$ are maximal tori with respective Weyl groups $W_\Y$ and $W_\X$. The action of $\SL(2,\Z)$ on $\kappa_\Y^{-1}(2)$ and $\kappa_\X^{-1}(3)$ factors through the diagonal action on $T^2_\Y$ and $T^2_\X$, and so the standard action on $\R^2/\Z^2$ and $\R^4/\Z^4$. It follows that any hyperbolic element of $\SL(2,\Z)$ acts ergodically on $\kappa_\Y^{-1}(2)$ and $\kappa_\X^{-1}(3)$.  See \cite{BL} for a more general discussion that includes these cases.

Our goal is to show that some hyperbolic elements of $\SL(2,\mathbb{Z})$, for example the cat map $\left(\begin{array}{cc} 2 & 1 \\ 1 & 1 \end{array}\right)$, act non-ergodically on $\kappa_\X^{-1}(\xi)$ for $\xi$ close to $-1$ despite the fact that $\SL(2,\Z)$ acts ergodically on $\kappa_\X^{-1}(\xi)$ for all boundary values $\xi$ by \cite{GLX}.

\subsection{KAM theory on a $\SU(2)$-character variety}\label{su2kam}

Let us briefly recall the strategy of Brown to derive the analogous statement in the $\SU(2)$ case which we discussed in Section \ref{sec-4}. We can write 
$$\Y=\{(x,y,z)\in[-2,2]^3 \ |\ -2\leq x^2+y^2+z^2-xyz-2\leq 2\}$$ 
in trace coordinates $(x,y,z)$, so that the elements of $\SL(2,\mathbb{Z})$ acts via polynomial automorphisms of $\mathbb{R}^3$.  For example, the cat map $\left(\begin{array}{cc} 2 & 1 \\ 1 & 1 \end{array}\right)$ acts via $(x,y,z)\mapsto (z,zy-x,z(zy-x)-y)$). Furthermore, we can ``blow up'' (or, more precisely, take the sphere of direction at) the point $\kappa^{-1}(-2)=\{(0,0,0)\}$ to get a $2$-sphere $\{(\dot{x},\dot{y},\dot{z})\ |\ \dot{x}^2+\dot{y}^2+\dot{z}^2=1\}$ where $\SL(2,\mathbb{Z})$ acts through a finite (octahedral) group (of order $24$).  For example, the cat map $\left(\begin{array}{cc} 2 & 1 \\ 1 & 1 \end{array}\right)$ acts via the element $(\dot{x},\dot{y},\dot{z})\mapsto (\dot{z},-\dot{x},-\dot{y})$ of order $3$. In particular, a hyperbolic element $\sigma$ of $\SL(2,\mathbb{Z})$ determines an one-dimensional semi-algebraic subset $$\textrm{Fix}(\sigma)=\{(x,y,z)\in \Y\ |\ \sigma(x,y,z)-(x,y,z)=0\}$$ of $\mathbb{R}^3$ consisting of its fixed points. The derivative of $\sigma|_{\kappa_\Y^{-1}(\xi)}$, at any point in $\textrm{Fix}(\sigma)\cap \kappa_\Y^{-1}(\xi)$, is the matrix $\sigma\in \SL(2,\mathbb{Z})$. The derivative of $\sigma|_{\kappa_\Y^{-1}(\xi)}$, at any point in $\textrm{Fix}(\sigma)\cap \kappa_\Y^{-1}(\xi)$ for $\xi$ close to $-2$, is a matrix close to an element of finite order in the sphere of directions.  For example, the subset of fixed points of the action of $\left(\begin{array}{cc} 2 & 1 \\ 1 & 1 \end{array}\right)$ on $\Y$ is the embedded line $$\{x=z, y=zy-x, z=z(zy-x)-y\}\cap \Y$$ and the derivatives of the actions of $\left(\begin{array}{cc} 2 & 1 \\ 1 & 1 \end{array}\right)$ on $\kappa_\Y^{-1}(\xi)$ at $\textrm{Fix}\cap \kappa_\Y^{-1}(\xi)$ vary from $\left(\begin{array}{cc} 2 & 1 \\ 1 & 1 \end{array}\right)$ at $\xi=2$ to a $2\times 2$ matrix of order $3$ at $\xi=-2$. Hence, a hyperbolic element $\sigma$ of $\SL(2,\mathbb{Z})$ ``usually'' produces a family of $2\times 2$ matrices in $\SL(2,\mathbb{R})$.  The family consists of derivatives of the action on $\kappa_\Y^{-1}(\xi)$ at $\textrm{Fix}\cap \kappa_\Y^{-1}(\xi)$ whose spectra vary from a pair of roots of unity (of orders $\leq 24$) at $\xi=-2$ to $\{\lambda,1/\lambda\}$ where $\lambda$ is a real number with $|\lambda|>1$.  Hence, we can apply KAM theory (Moser twist theorem) at those parameters $\xi$ where the spectrum is a pair of frequencies distinct from roots of unity of orders $\leq 6$ to conclude the non-ergodicity of the action of $\sigma$ on $\kappa_\Y^{-1}(\xi)$.  

For example, this strategy works for the cat map $\left(\begin{array}{cc} 2 & 1 \\ 1 & 1 \end{array}\right)$ as we can (and do!) compute the first Birkhoff invariant directly in {\it Mathematica} using the formulae at the end of Appendix \ref{appa}.  More precisely, we first compute the Birkhoff normal form of the cat map at fixed points near $\ell=-2$.  Using the normal form (its third order approximation is sufficient), we find the first Birkhoff invariant $\gamma_1\not=0$ (by showing $\alpha_2\not=0$, see Remark \ref{rema2}), which shows via KAM theory that the cat map is acting non-ergodically.  This computation is available on GitHub\footnote{\url{https://github.com/seanlawton/Non-ergodicity-on-SU-2-and-SU-3-character-varieties-of-the-once-punctured-torus}}.

\subsection{KAM theory on a $\SU(3)$-character variety}

In this section we outline the computational steps (implemented in {\it Mathematica} and available on GitHub) taken to prove that the cat map $M$ does not act ergodically on the relative character varieties $\mathfrak{L}_{(\zeta, \eta)}=\kappa_\X^{-1}(\zeta+i\eta)$ for $M$-fixed points $\zeta+i\eta$ near $-1$ (on the real line in the deltoid $\Delta$).

Here is an outline of the proof: (a) identify an elliptic fixed point in $\mathfrak{L}_{(\ell, 0)}$ for all $\ell$ in an interval $I$; (b) verify there exist elliptic fixed points that are irrational (in the terminology of \cite{EFK}) for some values of $\ell$ in $I$; (c) verify the non-planarity of the corresponding Birkhoff normal forms; and (d) apply the KAM theorem to get KAM tori.  The existence of such invariant tori prevents ergodicity.

The first step is to compute the cat map itself.  The map is polynomial in 9 variables on $\X(F_2,\SU(3))$.  Letting $x+iX=\tr_a$, $y+iY=\tr_b$, $z+iZ=\tr_{ab}$, $t+iT=\tr_{ab^{-1}}$, $u+iU=\tr_{aba^{-1}b^{-1}}$ be unitary coordinates on $\X(F_2,\SU(3))$ discussed in Section \ref{sec-2}, the cat map takes the form $(x,X,y,Y,z,Z,t,T,U)\mapsto(z, Z, t - x y - X Y + y z - Y Z, T - X y + x Y + Y z + y Z, x + t z - y z - x y z - X Y z + y z^2 - T Z + X y Z - Y Z - x Y Z - 2 Y z Z - y Z^2, -X + T z - X y z - Y z + x Y z + Y z^2 + t Z + y Z - x y Z - X Y Z + 2 y z Z - Y Z^2, y, -Y, U).$

Since we have already seen $(A(s), B(s))$ are fixed by the cat map for all $s\in[-1,3]$, we have a line of fixed points of the cat map given by $(x,X,y,Y,z,Z,t,T,U)=$ $$\left(-1 + 4 s^2, 0, -1 + \frac{4 s^2}{(1 - 2 s)^2}, 0, -1 + 4 s^2, 0, -1 + \frac{4 s^2}{(1 - 2 s)^2}, 0, 0\right).$$

For each $s$, the fixed point lies in $\mathfrak{L}_{(\ell,0)}=\kappa^{-1}_\X(\ell)$ for $\ell=$ $$\frac{((-3 + 4 s (3 - 6 s^2 + 4 s^3)) (-1 + 4 s (1 + 2 (-1 + s) s (-1 + 2 s))))}{(1 - 2 s)^4}.$$

The above formula comes from $u=\ell=P/2$.  Note also that $Q=\ell^2$ and consequently $P^2-4Q=0$.  We will use this implicit equation below to determine the third jet of the cat map in a chart.  For $s\in [-1,3)$, these fixed points come from irreducible representations in $\SU(2)$ composed with the second symmetric tensor (also an irreducible representation).  So they are irreducible representations and hence are smooth points in $\mathfrak{L}_{(\ell,0)}$ for $s\in [-1,3)$.  The moduli spaces $\mathfrak{L}_{(\ell,0)}$ are 6 dimensional singular varieties, but their smooth loci are full measure on which $\SL(2,\Z)$ acts smoothly.

We need to find a smooth chart of our elliptic fixed point and write the Taylor series of the cat map (up to third order).  Given the dimension of $\mathfrak{L}_{(\ell,0)}$ is 6, but naturally described in terms of 8 variables, we need to eliminate (locally) two variables.  We do this is two steps.  The first will be to use $P/2=\ell$ to explicitly solve for one variable and then we will use $P^2-4Q=0$ to implicitly solve for another.  There are many choices involved, and some but not all, lead to singular charts at our fixed points.  So we take care to avoid such choices.

Indeed, solving for $t$ in $P/2=\ell$, we find: \begin{align*}t&=xy+XY\\&-\bigg(\frac{16 (s-1) s \left(8 (s-1) (s (4 (s-1) s-1)+3)
   s^2+3\right)}{(1-2 s)^4}+\frac{6}{(1-2 s)^4}\\&-T^2+2 T (Xy-xY)-x^2 \left(Y^2+1\right)+2 x (X y Y+y z+Y Z)\\&-X^2
   \left(y^2+1\right)+2 X y Z-2 X Y z-y^2-Y^2-z^2-Z^2+3\bigg)^{1/2}.
\end{align*}

\begin{remark}
From the equation $P/2=\ell$, there are exactly two such solutions for any of the 8 variables we choose.  The solution substitutions are smooth if and only if the polynomial under the radical is non-zero.  Along the line of fixed points we are considering, the substitution for $t$ above (and also $z$) is in fact smooth (unlike some others like for $T$ or $Z$).
\end{remark}

Using this expression for $t$ we can explicitly write the Taylor expansion (centered at the fixed points) up to third order.  For example, at $s=.249$ ($\ell\approx -1$), the third order jet of $t$ centered at the fixed point (determined again by $s=.249$) is:\newline

\begin{minipage}{0.9\textwidth}$
-0.0158728 + 35.9596 T^2 - 27.4865 (0.751996 + x) - 
 106566. T^2 (0.751996 + x) + 27172.4 (0.751996 + x)^2 - 
 8.05253\times 10^7 (0.751996 + x)^3 + 1.14156 T X - 
 3383. T (0.751996 + x) X + 35.9686 X^2 - 
 106593. (0.751996 + x) X^2 - 21.6578 (0.0158728 + y) - 
 81099.4 T^2 (0.0158728 + y) + 
 41359. (0.751996 + x) (0.0158728 + y) - 
 1.83843\times 10^8 (0.751996 + x)^2 (0.0158728 + y) - 
 2790.3 T X (0.0158728 + y) - 81123.2 X^2 (0.0158728 + y) + 
 15752.2 (0.0158728 + y)^2 - 
 1.39954\times 10^8 (0.751996 + x) (0.0158728 + y)^2 - 
 3.55259\times 10^7 (0.0158728 + y)^3 - 54.0829 T Y + 
 160490. T (0.751996 + x) Y - 52.9413 X Y + 
 162822. (0.751996 + x) X Y + 121973. T (0.0158728 + y) Y + 
 124071. X (0.0158728 + y) Y + 56.2946 Y^2 - 
 166991. (0.751996 + x) Y^2 - 126961. (0.0158728 + y) Y^2 - 
 27.4707 (0.751996 + z) - 106566. T^2 (0.751996 + z) + 
 54274. (0.751996 + x) (0.751996 + z) - 
 2.41366\times 10^8 (0.751996 + x)^2 (0.751996 + z) - 
 3383. T X (0.751996 + z) - 106593. X^2 (0.751996 + z) + 
 41357. (0.0158728 + y) (0.751996 + z) - 
 3.67527\times 10^8 (0.751996 + x) (0.0158728 + y) (0.751996 + z) -  1.39954\times 10^8 (0.0158728 + y)^2 (0.751996 + z) + 
 160275. T Y (0.751996 + z) + 163034. X Y (0.751996 + z) - 
 166829. Y^2 (0.751996 + z) + 27172.4 (0.751996 + z)^2 - 
 2.41366\times 10^8 (0.751996 + x) (0.751996 + z)^2 - 
 1.83843\times 10^8 (0.0158728 + y) (0.751996 + z)^2 - 
 8.05253\times 10^7 (0.751996 + z)^3 + 1.14156 X Z - 
 3383. (0.751996 + x) X Z - 2790.3 X (0.0158728 + y) Z + 
 54.0829 Y Z - 160490. (0.751996 + x) Y Z - 
 121973. (0.0158728 + y) Y Z - 3383. X (0.751996 + z) Z - 
 160275. Y (0.751996 + z) Z + 35.9596 Z^2 - 
 106566. (0.751996 + x) Z^2 - 81099.4 (0.0158728 + y) Z^2 - 
 106566. (0.751996 + z) Z^2.$
 \\
 \end{minipage}
 
To get the third jet for $z$ we use $H:=(P/2)^2-Q=0$ implicitly.  First we substitute the third jet of $t$ into $H$ making $H$ a third order approximation in the 7 variables $x,X,y,Y,z,Z,T$.  Then we calculate all partial derivatives of $z$ implicitly at fixed points in terms of $s$ using $H$.  We continue this until we have the third jet of $z$ in terms of only the 6 variables $x,X,y,Y,Z,T$ up to third order centered at fixed points.  For example, at $s=.249$ the third order jet of $z$ is:\newline

\begin{minipage}{0.9\textwidth}$
-1.50399 + 1.28663 T^2 - x - 5.16029 T^2 (0.751996 + x) + 
 2.67381 (0.751996 + x)^2 - 10.7239 (0.751996 + x)^3 + 
 0.0844526 T X + 0.0167138 T (0.751996 + x) X + 1.22118 X^2 - 
 4.71443 (0.751996 + x) X^2 - 0.751996 (0.0158728 + y) - 
 0.799416 T^2 (0.0158728 + y) + 
 2.0107 (0.751996 + x) (0.0158728 + y) + 
 2.78895 (0.751996 + x)^2 (0.0158728 + y) - 
 6.31043 T X (0.0158728 + y) - 2.56426 X^2 (0.0158728 + y) + 
 0.673811 (0.0158728 + y)^2 + 
 5.45915 (0.751996 + x) (0.0158728 + y)^2 + 
 0.681032 (0.0158728 + y)^3 - 1.93507 T Y + 
 14.6832 T (0.751996 + x) Y - 1.8878 X Y - 
 0.0867515 (0.751996 + x) X Y + 0.902415 T (0.0158728 + y) Y + 
 3.0885 X (0.0158728 + y) Y + 1.94383 Y^2 - 
 13.0016 (0.751996 + x) Y^2 - 2.78224 (0.0158728 + y) Y^2 + 
 0.043608 T Z - 0.530329 T (0.751996 + x) Z + 0.0387346 X Z - 
 0.160995 (0.751996 + x) X Z + 1.16741 T (0.0158728 + y) Z - 
 7.39926 X (0.0158728 + y) Z + 1.7915 Y Z - 
 14.4187 (0.751996 + x) Y Z - 4.96467 (0.0158728 + y) Y Z + 
 1.22016 Z^2 - 5.07144 (0.751996 + x) Z^2 - 
 2.46712 (0.0158728 + y) Z^2.$
 \\
 \end{minipage}

To get a local representation of the cat map $M$ at a fixed point (in terms of $s$) we then substitute the third jet of $t$ to reduce to 7 variables, and then we substitute the third jet of $z$ to reduce to 6 variables. The resulting expression maps a smooth open neighborhood of a fixed point (diffeomorphic to $\R^6$) into $\R^8$.  We then project onto $\R^6$ by forgetting the variables we have eliminated ($t,z$).  This projection is smooth (whenever the jets are defined) since the eliminated variables are locally a graph.  

We experimentally find values of $s$ between .239 and .249 (that is, $\ell$ near $-1$) where the spectrum is elliptic and the chart is smooth.  This implies there exists an interval between .239 and .249  with the same properties by continuity \cite{HaMa}.

Now to apply KAM theory as described in Appendix \ref{appb}, we need to put the third jet of the cat map in its Birkhoff normal form.  So we first translate the codomain and domain of our chart so the fixed points are at the origin (in both), and then diagonalize the jets (so the Jacobian is diagonal).  This amounts to finding the matrix $C_0$ in Appendix \ref{appb} after doing a change of variables making the fixed points the origin.  

Following the recipe in Appendix \ref{appb} we then make the formal change of coordinates tangent to the identity.  This amounts to making substitutions of the form $x-x_0\mapsto \frac{\xi_1+\eta_1}{2}$ and $X-X_0\mapsto\frac{\xi_1-\eta_1}{2i}$ etc.  Once these substitutions have been made for all  6 variables in terms of  $\xi_1,\eta_1,...,\xi_3,\eta_3$ we obtain the Birkhoff normal form of the cat map (up to third order).

Once we have the Birkhoff normal form, we pick out the appropriate coefficients from the jets to calculate coefficients of the Birkhoff normal form (Equation \eqref{eq:higheralpha}) whose determinant detects ``torsion''.  We calculate this at fixed points finding it non-zero and hence showing the twist condition holds.

For example, at $s=.249$, the relevant coefficients of the Birkhoff normal form are: $(\alpha_{jk})_{1\leq j,k\leq 3}=$
{\scriptsize $$
\left(
\begin{array}{ccc}
 0.00552244\, -0.0340402 i & 0.0107941\, -0.000895037 i & 1.27133\,
   +2.0689 i \\
 -0.200044-0.525768 i & -0.327311-0.329913 i & -0.800469-0.841658 i
   \\
 -4.01094-2.67688 i & -8.79221-8.77867 i & 250.545\, +281.496 i \\
\end{array}
\right),$$} whose determinant is $-20.077 - 0.73655 i$.

To complete the verification of the KAM criterion as per \cite[Theorem 1.4]{EFK}, we check non-planarity (see the final subsection in Appendix \ref{appb}).  

Although it suffices to verify an infinitesimal non-planarity condition at a single point, to reduce the number of computations, we instead verify {\it generic} non-planarity by checking that a certain determinant on four points of the curve is non-zero.  This ensures that the curve has four points not lying on a plane.  Since the curve is analytic we conclude that the zero set of the determinant of derivatives is discrete (hence has no accumulation points).  In short, since the curve is analytic and has four points not contained in a plane, the point-wise infinitesimal non-planarity condition holds at all but possibly finitely many points. 

This is a numeric criterion which we verify directly in {\it Mathematica} using $s$ values of $.239, .24, .241, .242.$  With this complete, we thus know that there is an invariant torus and hence non-ergodicity.

The computations we have described above prove:

\begin{theorem}\label{main} Let $M\in\SL(2,\Z)$ be the cat map. Then, there exists $\ell\in \Delta$ such that $M$ does not act ergodically on $\kappa_\X^{-1}(\ell)\subset \X(F_2,\SU(3))$. 
\end{theorem}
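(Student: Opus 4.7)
The plan is to imitate Brown's $\SU(2)$ argument in the higher-dimensional setting by locating an elliptic fixed point of $M$ in a suitable level set $\mathfrak{L}_{(\ell,0)}\subset \X(F_2,\SU(3))$, placing $M$ in Birkhoff normal form near this fixed point, verifying the twist/non-degeneracy conditions required by the KAM theorem of Appendix \ref{appb}, and thereby producing invariant tori that obstruct ergodicity. The natural source of candidate fixed points is the family $\rho_s^{\otimes 2}$ constructed in Section 5 from the second symmetric power of the $\SU(2)$-representations $(A(s),B(s))$: as $s$ varies, the boundary trace $\tr([A(s)^{\otimes 2},B(s)^{\otimes 2}])$ sweeps the whole interval $\Delta\cap\R=[-1,3]$, and for $s$ slightly less than the right endpoint $0.2597\ldots$ of the relevant parameter interval, the representation is irreducible, so the fixed point lies in the smooth 6-dimensional part of the level set $\mathfrak{L}_{(\ell,0)}$ with $\ell$ near $-1$.

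To write $M$ in local coordinates near such a fixed point, I would start from the 8 real unitary coordinates $(x,X,y,Y,z,Z,t,T)$ on $\X(F_2,\SU(3))$ introduced in Section 2 and eliminate two of them using the two real equations $\mathrm{Re}(\tr_{c^{-1}})=P/2=\ell$ and $Q-P^2/4=0$. Choosing the pair carefully, for instance solving $P/2=\ell$ for $t$ explicitly and then using $P^2/4-Q=0$ to eliminate $z$ implicitly as in the preceding exposition, yields smooth local coordinates on a neighborhood in $\mathfrak{L}_{(\ell,0)}$ centered at the fixed point. Substituting the resulting jets of $t$ and $z$ into the polynomial expression for $M$ then produces the 3-jet of the cat map as a map from $\R^6$ to itself fixing the origin.

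With the 3-jet in hand, I would diagonalize the linearization $DM$ at the fixed point, pass to symplectic complex coordinates $(\xi_j,\eta_j)$ by the near-identity change of variables of Appendix \ref{appb}, and extract the Birkhoff normal form through order three. Two hypotheses of the KAM theorem \cite[Theorem 1.4]{EFK} must then be verified: first, that the spectrum of $DM$ consists of three pairs of complex-conjugate eigenvalues on $\U(1)$ free of low-order resonances; second, that the matrix $(\alpha_{jk})_{1\le j,k\le 3}$ extracted from the cubic terms of the normal form is non-singular (the torsion condition), and that the generic non-planarity condition holds. Continuity in $s$ of the characteristic polynomial of $DM$ and of the Birkhoff coefficients propagates these open conditions from sample values to a nonempty open subinterval of parameters, and the KAM theorem then furnishes invariant tori of positive measure accumulating on the fixed point, preventing $M$ from acting ergodically on the corresponding level set.

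The principal obstacle is computational rather than conceptual: the polynomials encoding $M$ and the defining relation of $\X(F_2,\SU(3))$ are unwieldy, so symbolic manipulation of the 3-jets, diagonalization of $DM$, the passage to Birkhoff normal form, and the extraction of the non-degeneracy and non-planarity invariants must all be entrusted to a computer algebra system such as \emph{Mathematica}. A subtler point is to choose the local elimination so that the implicit and explicit solutions for the eliminated coordinates are smooth at the chosen fixed point (naive choices produce singular charts) and to pick a window of $s$-values where the spectrum is elliptic and well separated from low-order roots of unity; once such a window is identified, the open KAM hypotheses need only be checked numerically at finitely many sample values of $s$ and propagated by continuity to conclude the existence of the desired $\ell\in\Delta$.
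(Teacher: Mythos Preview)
Your proposal is correct and follows essentially the same approach as the paper: the same fixed-point family $\rho_s^{\otimes 2}$, the same elimination of $t$ via $P/2=\ell$ and of $z$ via $(P/2)^2-Q=0$, the same passage to a third-order Birkhoff normal form, and the same verification of the torsion and non-planarity hypotheses of \cite[Theorem 1.4]{EFK} by computer algebra near $s\approx 0.249$. Your identification of the subtle points (smoothness of the chart, choice of an elliptic window, propagation by continuity) also matches the paper's treatment.
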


\appendix

\section{Elliptic fixed points of surface diffeomorphisms}\label{appa} 

We follow the discussion of Birkhoff normal forms in \cite[\S 23]{SiegelMoser}. 

Let $x_1=f(x,y)$, $y_1=g(x,y)$ be the coordinate functions of a real-analytic surface diffeomorphism $S$ with an elliptic fixed point at the origin $(0,0)$.  Let
$$f(x,y) = a x + by + \cdots, \text{and } g(x,y) = cx+dy+\cdots$$ 
where $\left(\begin{array}{cc} a & b \\ c& d\end{array}\right)\in \SL(2,\mathbb{R})$ is a matrix with elliptic eigenvalues. Note: the ``$\cdots$'' in the expression for $f,g$ stands for higher-order terms. 

Let us write $z=x+iy$ and $z_1=x_1+iy_1$, so that $S$ becomes $S(z)=z_1$. We want to simplify the Taylor expansion of $S$ via an appropriate change of variables 
$$x=\phi(\xi,\eta) = \alpha \xi+\beta\eta+\cdots, \quad y=\psi(\xi,\eta) = \gamma\xi+\delta\eta+\cdots,$$ 
$x_1=\phi(\xi_1,\eta_1)$, and  $y_1=\psi(\xi_1,\eta_1)$. In other words, $z=C\zeta$, $z_1=C\zeta_1$, $\zeta_1=\xi_1+i\eta_1$, $\zeta=\xi+i\eta$, and $\alpha\delta-\beta\gamma\neq 0$.

\subsection{Preliminary reduction} Take $C_0\in \SL(2,\mathbb{C})$ such that 
$$C_0^{-1} \left(\begin{array}{cc} a & b \\ c& d\end{array}\right) C_0 = \left(\begin{array}{cc} \lambda & 0 \\ 0& \mu\end{array}\right)$$ 
where $\mu=\overline{\lambda}$ and $|\lambda|=ad-bc=1$. Note that the change of variables by $C_0$ converts $z_1=Sz$ into 
$$\zeta_1=C_0^{-1}z_1 = C_0^{-1}SC_0\zeta = T\zeta$$ 
where $\xi_1 = p(\xi,\eta) = \lambda\xi+\cdots$, $\eta_1 = q(\xi,\eta)=\mu\eta+\cdots$. Since $S$ is real, we also have that $\overline{p}(\xi,\eta) = q(\eta,\xi)$. 

\subsection{Birkhoff normal forms} We want to build a (formal) change of coordinates $C$ tangent to the identity, say 
$$x=\phi(\xi,\eta) = \xi+\sum\limits_{k=2}^{\infty}\phi_k, \quad y=\psi(\xi,\eta) = \eta+\sum\limits_{k=2}^{\infty}\psi_k,$$ 
where $\phi_k$ and $\psi_k$ are homogenous of degree $k$ in $\xi$ and $\eta$, such that $U=C^{-1}TC$ has the \emph{Birkhoff normal form} 
$$\xi_1 = u\xi, \quad \eta_1=v\eta, \quad u=\sum_{k=0}^{\infty}\alpha_{2k} (\xi\eta)^k, \quad v=\sum\limits_{k=0}^{\infty}\beta_{2k}(\xi\eta)^k.$$ 

\begin{remark} As it is shown in \S23 of Siegel--Moser's book \cite{SiegelMoser}, if $S$ preserves area in the sense that $f_xg_y-f_yg_x=1$, then the formal series defining $C$ as above is unique (and, moreover, $C$ preserves area) whenever $\lambda$ is not a root of unity. In the sequel, we will review the construction of $C$ in order to find the first few terms of its Taylor expansion. 
\end{remark}

\begin{remark}\label{rema2} If we write $u=e^{iw}$, $v=e^{-iw}$, $w(\omega) = \sum\limits_{k=0}^{\infty}\gamma_k\omega^k$, where $\omega=\xi\eta$, the \emph{Birkhoff invariants} $\gamma_k$ are real, and $e^{i\gamma_0}=\lambda$, $-\pi<\gamma_0<\pi$, then $$u=\lambda e^{i\sum\limits_{k=1}^{\infty}\gamma_k\omega^k} = \lambda(1+i\gamma_1\omega+O(\omega^2)),$$ so that the first Birkhoff invariant $\gamma_1$ does not vanish if and only if $\alpha_2\neq0$. 
\end{remark} 

By definition, we want to solve the functional equations 
$$\phi(u\xi,v\eta) = p(\phi(\xi,\eta),\psi(\xi,\eta)) \quad \textrm{ and } \quad \psi(u\xi,v\eta)=q(\phi(\xi,\eta),\psi(\xi,\eta)).$$ 

By comparing the linear terms, we impose $\alpha_0=\lambda$, $\beta_0=\mu$. Also, we set $\alpha_{\ell}=\beta_{\ell}=0$ for each $\ell\geq1$ that is odd. 

Assuming that $\phi_{\ell}$, $\psi_{\ell}$, $\alpha_{\ell-1}$, $\beta_{\ell-1}$, $\ell<k$ were determined for a certain $k\geq 2$ (for example $k=2$), let us determine $\phi_{k}$, $\psi_{k}$, $\alpha_{k-1}$, $\beta_{k-1}$.

By comparing the terms of degree $k$ in our functional equations, we get that 
$$\phi_k(\lambda\xi,\mu\eta)+\alpha_{k-1}(\xi\eta)^{(k-1)/2}\xi = \lambda\phi_k(\xi,\eta)+\cdots$$ 
and 
$$\psi_k(\lambda\xi,\mu\eta)+\beta_{k-1}(\xi\eta)^{(k-1)/2}\eta = \mu\psi_k(\xi,\eta)+\cdots.$$ 
Note: in the previous expressions ``$\cdots$'' stands for homogeneous terms of degree $k$ already known. 

Next, let us write $\phi_k(\xi,\eta)=\sum\limits_{\ell=0}^k a_{k;\ell} \xi^{k-\ell}\eta^{\ell}$ and $\psi_k(\xi,\eta)=\sum\limits_{\ell=0}^k b_{k;\ell} \xi^{k-\ell}\eta^{\ell}$, so that 
$$\phi_k(\lambda\xi,\mu\eta) - \lambda \phi_k(\xi,\eta) = \sum\limits_{\ell=0}^k a_{k;\ell}(\lambda^{k-\ell}\mu^{\ell}-\lambda) \xi^{k-\ell}\eta^{\ell}$$
and 
$$\psi_k(\lambda\xi,\mu\eta) - \mu \psi_k(\xi,\eta) = \sum\limits_{\ell=0}^k b_{k;\ell}(\lambda^{k-\ell}\mu^{\ell}-\mu) \xi^{k-\ell}\eta^{\ell}.$$ 
Since $\lambda\mu=1$, one has $\lambda^{k-\ell}\mu^{\ell}-\lambda = \lambda(\lambda^{k-2\ell-1}-1)$ and $\lambda^{k-\ell}\mu^{\ell}-\mu = \lambda^{-1}(\lambda^{k-2\ell+1}-1)$. Hence, if $\lambda$ is not a ``small'' root of unity (so that $\lambda^{k-2\ell\pm1}=1$ if and only if $k=2\ell\mp1$), then we can determine uniquely $\alpha_{k-1}$, $\beta_{k-1}$, $a_{k;\ell}$ for $\ell\neq \frac{k-1}{2}$, and $b_{k;\ell}$ for $\ell\neq\frac{k+1}{2}$ from the previous two systems of equations. 

Therefore, it remains only to determine $a_{k;h}$ and $b_{k;h+1}$ when $k=2h+1$ is odd. Indeed, observe that the terms of degree $<n-1$ of 
$$\phi_{\xi}-\psi_{\eta} = \sigma(\xi,\eta) \quad \textrm{and} \quad \phi_{\xi}\psi_{\eta}-\phi_{\eta}\psi_{\xi}-1=\tau(\xi,\eta)-1$$ 
do not contain powers of $\omega=\xi\eta$ for $n=2$ and, in general, this is also trivially true for $n+1$ when it is true for $n$ even. Moreover, for $k=2h+1$, the coefficient of $\omega^h=(\xi\eta)^h$ in $\sigma(\xi,\eta)$ is $(h+1)(a_{k;h}-b_{k;h+1})$, so that we want 
$$a_{k;h}=b_{k;h+1}.$$
Furthermore, the terms of degree $k-1$ in $\tau(\xi,\eta)$ are $(\phi_k)_{\xi}+(\psi_k)_{\eta}$ and a polynomial whose coefficients are already known. In particular, the vanishing of the coefficient of $\omega^h$ in $\tau$ determines $(h+1)(a_{k;h}+b_{k;h+1})$, and we are done. 

\begin{remark} When $S$ preserves area, the fact that we avoid powers of $\omega=\xi\eta$ in $\sigma(\xi,\eta)$ and $\tau(\xi,\eta)$ actually shows that the formal change of variables $C$ preserves area (i.e., $\tau(\xi,\eta)\equiv 1$).  
\end{remark}

\subsection{Computation of the first Birkhoff invariant} Let us now determine $\alpha_2$ by explicitly working out the argument above.

We take $k=2$, so that $\alpha_0=\lambda$, $\beta_0=\mu$, $\alpha_1=0=\beta_1$ and $\phi_1(\xi,\eta)=\xi$, $\psi_1(\xi,\eta)=\eta$. We write the coordinates of $T$ as 
$$p(\xi,\eta) = \lambda\xi+p_{2;0}\xi^2+p_{2;1}\xi\eta+p_{2;2}\eta^2+\cdots$$
and 
$$q(\xi,\eta) = \mu\eta+q_{2;0}\xi^2+q_{2;1}\xi\eta+q_{2;2}\eta^2+\cdots$$ 

The relevant functional equations become 
$$\phi_2(\lambda\xi,\mu\eta) = \lambda\phi_2(\xi,\eta)+p_{2;0}\xi^2+p_{2;1}\xi\eta+p_{2;2}\eta^2$$ 
and 
$$\psi_2(\lambda\xi,\mu\eta) = \mu\psi_2(\xi,\eta)+q_{2;0}\xi^2+q_{2;1}\xi\eta+q_{2;2}\eta^2.$$ 
Thus, the resulting equations 
$$a_{2;0}(\lambda^2-\lambda)\xi^2+a_{2;1}(\lambda\mu-\lambda)\xi\eta+a_{2;2}(\mu^2-\lambda)\eta^2 = p_{2;0}\xi^2+p_{2;1}\xi\eta+p_{2;2}\eta^2$$ 
and 
$$b_{2;0}(\lambda^2-\lambda)\xi^2+b_{2;1}(\lambda\mu-\mu)\xi\eta+b_{2;2}(\mu^2-\mu)\eta^2 = q_{2;0}\xi^2+q_{2;1}\xi\eta+q_{2;2}\eta^2$$ 
determine $\phi_2$ and $\psi_2$. 

Next, given that $\alpha_0=\lambda$, $\beta_0=\mu$, $\alpha_1=0=\beta_1$, $\phi_1(\xi,\eta)=\xi$, $\psi_1(\xi,\eta)=\eta$, and $\phi_2$, $\psi_2$ were determined, we take $k=3$ to get the functional equations 
\begin{eqnarray*}
\phi_3(\lambda\xi,\mu\eta)+\alpha_{2}(\xi\eta)\xi &=& \lambda\phi_3(\xi,\eta) +\big[p_{2;0}(\xi+\phi_2(\xi,\eta)+\cdots )^2 \\&+&\ p_{2;1}(\xi+\phi_2(\xi,\eta)+\cdots )(\eta+\psi_2(\xi,\eta)+\cdots )\\&+& p_{2;2}(\eta+\psi_2(\xi,\eta)+\ \cdots )^2\big]_{\textrm{deg }3}
\\&+& p_{3;0}\xi^3+p_{3;1}\xi^2\eta+p_{3;2}\xi\eta^2+p_{3;3}\eta^3
\end{eqnarray*} 
and 
\begin{eqnarray*}
\psi_3(\lambda\xi,\mu\eta)+\beta_{2}(\xi\eta)\eta &=& \mu\psi_3(\xi,\eta)
+\big[q_{2;0}(\xi+\phi_2(\xi,\eta)+\cdots)^2 \\&+& q_{2;1}(\xi+\phi_2(\xi,\eta)+\cdots)(\eta+\psi_2(\xi,\eta)+\cdots)\\&+& q_{2;2}(\eta+\psi_2(\xi,\eta)+\cdots)^2\big]_{\textrm{deg }3} \\ 
&+& q_{3;0}\xi^3+q_{3;1}\xi^2\eta+q_{3;2}\xi\eta^2+q_{3;3}\eta^3
\end{eqnarray*} 

By writing the identities for the coefficient of the term $\xi^2 \eta$ in the first formula, we have
$$
\lambda (\lambda \mu -1) a_{3,1} + \alpha_2 = 2 p_{2,0} a_{2,1} + p_{2,1} (b_{2,1} + a_{2,0}) + 2 p_{2,2} b_{2,0} + p_{3,1}
$$
where $a_{2,0}$, $a_{2,1}$ is given in terms of $p_{2,0}$, $p_{2,1}$ and $b_{2,0}$, $b_{2,1}$ are given in terms of $q_{2,0}$, $q_{2,1}$
respectively. In fact,
$$
a_{2,0} =  \frac{p_{2,0}}{ \lambda (\lambda-1)}\,, \quad a_{2,1} =  \frac{p_{2,1}}{ \lambda (\mu-1)}$$ and
$$b_{2,0} =  \frac{q_{2,0}}{ \lambda (\lambda-1)}\,, \quad b_{2,1} =  \frac{q_{2,1}}{ \mu (\lambda-1)} .
$$
In conclusion, since $\lambda \mu=1$, the above equations, while they are not sufficient to determine $a_{3,1}$, they do determine the coefficient
$\alpha_2$ of the Birkhoff normal form:
$$
\alpha_2 =   \frac{ 2 p_{2,0}  p_{2,1}}{ \lambda (\mu-1)}  + \frac{p_{2,1}}{ \lambda \mu (\lambda-1)} ( \lambda q_{2,1}   +  \mu p_{2,0}) +  \frac{ 2 p_{2,2}q_{2,0}}{ \lambda (\lambda-1)} + p_{3,1}.
$$ 

\section{Elliptic fixed points of symplectomorphisms}\label{appb}

The subsequent discussion is based on the descriptions of Birkhoff normal forms in \cite{EFK}, \cite{K}, \cite{M} and \cite{SiegelMoser}.

Let $f$ be a real-analytic symplectomorphism with an elliptic fixed point at the origin $0\in\mathbb{R}^{2d}$, that is, the spectrum of $Df(0)$ has the form  
$$\{e^{\pm2\pi i \omega_j}:1\leq j\leq d\}$$ 
for some frequency vector $\omega=(\omega_1,\dots,\omega_d)\in [0,1/2]^d$. 

We take $C_0\in \SL(2d,\mathbb{C})$ such that $$C_0^{-1}Df(0)C_0 = \textrm{diag}(\dots, e^{2\pi i\omega_j}, e^{-2\pi i\omega_j},\dots),$$ and $T = C_0^{-1} f C_0$ has the form: 
$$\xi_j\mapsto p_j(\xi ,\eta) = \lambda_j\xi_j +O_2(\xi, \eta)  \quad \textrm{ and } \quad \eta_j\mapsto q_j(\xi,\eta) = \mu_j\eta_j+ O_2(\xi, \eta) $$ 
where $\lambda_j=e^{2\pi i\omega_j}$, $\mu_j=\overline{\lambda_j}$, for all $j=1, \dots, d$  and $O_2(\xi, \eta)$ stand for higher-order terms in $
\xi:= (\xi_1, \dots, \xi_d)$ and $\eta:= (\eta_1, \dots, \eta_d)$.

\subsection{Birkhoff normal form} We want to build a (formal) change of coordinates $C$ tangent to the identity: 
$$x_j =\phi_j(\xi , \eta) = \xi_j + \sum\limits_{k=2}^{\infty}\phi_{j,k}(\xi,\eta)\ \text{ and } \ y_j =\psi_j(\xi, \eta) = \eta_j + \sum\limits_{k=2}^{\infty}\psi_{j,k}(\xi,\eta),$$ 
where $\phi_{j,k}$ and $\psi_{j,k}$ are homogeneous of degree $k$ in $(\xi, \eta)$, such that $C^{-1}TC$ has the \emph{Birkhoff normal form}:
$$\xi_j\mapsto u_j\xi_j \ \textrm{ and } \ \eta_j\mapsto v_j\eta_j,$$ 
where $u_j=e^{i\partial_{r_j}B}$, $v_j = e^{-i\partial_{r_j}B}$, and $$B(r) = 2\pi\langle\omega, r\rangle + \sum \frac{1}{2}b_{mn} r_m r_n +O_3(r)$$ is a (formal) power series on $r=(r_1,\dots, r_d)$, with $r_j:=\xi_j\eta_j$ with the convention that $b_{mn}=b_{nm}$. 

\begin{remark}
For later use, note that $u_j = \lambda_j(1 + i\sum b_{jk} r_k + O_2(r)) = \lambda_j+\sum \alpha_{jk} r_k+O_2(r)$, where $\alpha_{jk}=i\lambda_j b_{jk}$.  In order to compute the Birkhoff invariants $(b_{jk})$ and establish the desired twist condition (see Subsection  \ref{subsectwist}) it is therefore enough to compute the coefficients $(\alpha_{jk})$ of the Birkhoff normal form (see Formula \eqref{eq:higheralpha} below).
\end{remark}

\begin{remark} Given a real-analytic function $F$ of $(\xi, \eta)$, we denote by $F_{k}^{a_1,\dots,a_n|b_1,\dots,b_m}$, $k=n+m$, the normalized\footnote{In view of the symmetries given by permutations of the variables each coefficient of an unordered monomial $\xi_{a_1}\cdots\xi_{a_n}\eta_{b_1}\cdots\eta_{b_m}$ should be normalized dividing it by the factor
$n! m! /\mu_{a,1}! \cdots \mu_{a, k_a} !  \mu_{b,1} ! \cdots \mu_{b,k_b}!$  with $\mu_{a, 1}, \dots \mu_{a, k_a}$ and $\mu_{b, 1} \cdots \mu_{b, k_b}$  the list of multiplicities of $a_1, \dots a_n$ and $b_1, \dots, b_m$, respectively.}   coefficient of the ordered monomial $\xi_{a_1}\cdots\xi_{a_n}\eta_{b_1}\cdots\eta_{b_m}$ in the Taylor expansion of $F$, with the convention that $\xi_0=\eta_0=1$. 
\end{remark} 

\subsection{Computation of the first Birkhoff invariants} Let $$ u\xi  = (u_1\xi_1, \dots u_d \xi_d)$$ and similarly $v\eta=  (v_1\eta_1, \dots v_d \eta_d)$.  By definition, we want to solve the functional equations:
$$\phi_j(u \xi ,v\eta) = p_j(\phi(\xi,\eta),\psi(\xi, \eta)) \ \textrm{ and } \ \psi_j(u\xi,v \eta) = q_j(\phi(\xi,\eta),\psi(\xi, \eta)).$$

Let us first look at the terms of degree $\leq 2$. Let the symbol $O_3(\xi, \eta)$ stand for terms of degree at least $3$ in the vector $(\xi, \eta)$. Note that 
\begin{eqnarray*} 
\phi_j(u \xi,v\eta) &=& u_j\xi_j + \sum_{m,n=1}^d \phi_{j,2}^{m,n|0} u_m u_n \xi_m \xi_n + \sum_{m,n=1}^d \phi_{j,2}^{m|n} u_m v_n \xi_m \eta_n  \\ &+&\sum_{m,n=1}^d \phi_{2,j} ^{0|m,n} v_m v_n \eta_m\eta_n + O_3(\xi,\eta) \\ 
&=& \lambda_j\xi_j + \sum_{m,n=1}^d \phi_{j,2}^{m,n|0} \lambda_m\lambda_n \xi_m\xi_n + \sum_{m,n=1}^d \phi_{j,2}^{m|n} \lambda_m \mu_n \xi_m \eta_n  \\ &+&\sum_{m,n=1}^d \phi_{j,2}^{0|m,n} \mu_m\mu_n \eta_m\eta_n + O_3(\xi,\eta)
\end{eqnarray*} 
and 
\begin{eqnarray*}
p_j(\phi(\xi,\eta),\psi(\xi, \eta)) &=& \lambda_j \phi_j(\xi,\eta) + \sum_{m,n=1}^d  p_{j, 2}^{ m,n|0} \phi_m(\xi,\eta) \phi_n(\xi,\eta)  \\ &+& \sum_{m,n=1}^d p_{j,2}^{m|n} \phi_m(\xi,\eta) \psi_n(\xi,\eta) 
\\&+& \sum_{m,n=1}^d p_{j,2}^{0|m,n} \psi_m(\xi,\eta)\psi_n(\xi,\eta) + O_3(\xi,\eta) \\ 
&=& \lambda_j\xi_j \\&+&\!\!\!\lambda_j\! \left(\sum_{m,n=1}^d \phi_{j,2}^{m,n|0} \xi_m\xi_n + \phi_{j,2}^{m|n} \xi_m \eta_n+\phi_{j,2}^{0|m,n} \eta_m\eta_n \right) 
\\ &+& \sum_{m,n=1}^d p_{j,2}^{m,n|0} \xi_m\xi_n + \sum_{m,n=1}^d p_{j,2}^{m|n} \xi_m \eta_n \\&+& \sum_{m,n=1}^d p_{j,2}^{0|m,n} \eta_m\eta_n + O_3(\xi,\eta).
\end{eqnarray*} 

Hence, we can determine $\phi_{j,2}$ (resp., $\psi_{j,2}$) in terms of $ p_{j,2}$ (resp., $q_{j,2}$) \emph{provided} that the non-resonance conditions 
$$\lambda_j\neq \lambda_m\lambda_n, \lambda_m\mu_n, \mu_m\mu_n$$ 
are satisfied for all $1\leq j, m, n\leq d$.   In fact we have the formulae:
\begin{equation}
\label{eq:phi_{j,2}}
\phi_{j,2}^{m,n|0} =  \frac{p_{j,2}^{m,n|0}}{ \lambda_m\lambda_n -\lambda_j }\,,  \  \phi_{j,2}^{m|n} =  \frac{p_{j,2}^{m|n}}{ \lambda_m\mu_n -\lambda_j }
\ \text{ and } \ \phi_{j,2}^{0|m,n} =  \frac{p_{j,2}^{0|m,n}}{ \mu_m\mu_n -\lambda_j } \,.
\end{equation}
and similarly (or by complex conjugation):
\begin{equation}
\label{eq:psi_{j,2}}
\psi_{j,2}^{m,n|0} =  \frac{q_{j,2}^{m,n|0}}{ \mu_m\mu_n -\mu_j }\,,  \  \psi_{j,2}^{m|n} =  \frac{q_{j,2}^{m|n}}{ \mu_m\lambda_n -\mu_j }
\ \text{ and } \ \psi_{j,2}^{0|m,n} =  \frac{q_{j,2}^{0|m,n}}{ \lambda_m\lambda_n -\mu_j } \,.
\end{equation}

Next, to determine the Birkhoff invariants $\alpha_{jk}$ we consider the terms of degree $3$. Indeed, observe that since
$$
u_i \xi_i =  (\lambda_i + \sum_{\ell}  \alpha_{i\ell} \xi_\ell \eta_\ell ) \xi_i \quad  \text{ and } \quad  v_i \eta_i =  (\mu_i + \sum_{\ell}  \alpha_{i\ell} \xi_\ell \eta_\ell ) \eta_i \,,
$$
the coefficient of $\xi_k\xi_j\eta_k$ in  $\phi_j (u\xi, v\eta)$ is 
$$\alpha_{jk} + \phi_{j,3}^{k,j|k} \lambda_k\lambda_j\mu_k\,.$$
In fact, the functions $u\xi$ and $v\eta$ contain only monomials of degree $1$ and $3$, hence the quadratic part of $\phi_j$ does not contribute to the coefficients
of $\xi_k\xi_j\eta_k$ (since it has  degree $3$), the linear part of $\phi_j$, equal to $\xi_j$, contributes the coefficient of $\xi_k\xi_j\eta_k$  in the product
$u_j \xi_j$, that is $\alpha_{kj}$, and finally the cubic part of $\phi_j$ contributes only when evaluated on the linear parts of $u\xi$ and $v\eta$, hence  it contributes
the term $\phi_{j,3}^{k, j | k    } \lambda_k \lambda_j \mu_k$. 

The coefficient of $\xi_k\xi_j\eta_k$ in $p_j(\phi(\xi,\eta),\psi(\xi, \eta))$ is 
\begin{eqnarray*} 
\lambda_j \phi_{3,j}^{k,j|k} &+& \sum_{n=1}^d \left( p_{j,2}^{k,n|0} \phi_{n,2}^{j|k}  + p_{j,2}^{n,j |0} \phi_{n,2}^{k|k} +  p_{j,2}^{k|n}\psi_{n,2}^{j|k} + p_{j,2}^{n|k}\phi_{n,2}^{k,j|0} \right)
\\ &+&   \sum_{n=1}^d  (p_{j,2}^{0|n, k} +  p_{j,2}^{0|k, n}     )\psi_{n,2}^{k,j|0}    + p_{j,3}^{k,j|k} \,.
\end{eqnarray*} 
In fact, we have the following. The linear term of $p_j$ contributes the coefficient of $\xi_k\xi_j\eta_k$  in $\lambda_j \phi_j$, that is, the term 
$ \lambda_j \phi_{j,3}^{k,j | k}$, the cubic terms of $p_j$ contribute only the term $p_{j,3}^{k,j|k}$, since the only linear term of $\phi_i(\xi, \eta)$ is $\xi_i$ 
and of $\psi_i(\xi, \eta)$ is $\eta_i$, and all other terms come from the quadratic part of $p_j$.  

We analyze the contribution of $p_2(\phi(\xi, \eta), \psi(\xi, \eta))$ by  distinguishing $3$ cases;
the terms of the forms:
\begin{itemize}
 \item[(I)]$p_{j,2}^{m,n|0} \phi_m(\xi, \eta) \phi_n(\xi,\eta),$
 \item[(II)] $p_{j,2}^{m|n} \phi_m(\xi, \eta) \psi_n(\xi,\eta),$
\item[(III)] $p_{j,2}^{0|m,n} \psi_m(\xi, \eta) \psi_n(\xi,\eta).$
\end{itemize}

 The ordered monomial $\xi_k \xi_j \eta_k$ can be written as product of two factors in the following $4$ ways (there are $3$ permutations of $\xi_k \xi_j\eta_k$
which preserve the order of $\xi_k\xi_j$, and  the permutation $ \xi_k \xi_j \eta_k$ can be written as product in $2$ ways, see $a)$ and $c)$ below):
$$
a) \, \xi_k \cdot (\xi_j \eta_k)\,, \quad   b) \, (\xi_k  \eta_k) \cdot \xi_j , \quad   c) \,  (\xi_k  \xi_j) \cdot \eta_k \,, \quad   d)\, \eta_k \cdot (\xi_k \xi_j) \,.
$$
We recall that the only linear term of $\phi_i$ and $\psi_i$ are respectively $\xi_i$ and $\eta_i$. 

Therefore in case (I) we have that  one term of the form  
$a)$ for $m=k$ and one of the form $b)$ for $n=j$, namely
$$
p_{j,2}^{k,n|0} \phi_{n,2} ^{j | k}     \qquad \text{ and, respectively,} \qquad   p_{j,2}^{m,j|0}\phi_{m,2} ^{k| k} 
$$
and no terms of the form $c)$ or $d)$;  in case (II) we have one term of the form $a)$ for $m=k$
and one of the form $c)$ for $n=k$, namely
$$
p_{j,2}^{k|n} \psi_{n,2} ^{j | k}     \qquad \text{ and, respectively,} \qquad   p_{j,2}^{m|k}\phi_{m,2} ^{k,j| 0} 
$$
and no terms of the form $a)$ or $b)$; finally,  in case (III) we have one term of the form
$c)$ for $n=k$ and one of the form $d)$ for $m=k$, namely
$$
p_{j,2}^{0|m,k} \psi_{m,2} ^{k,j | 0}     \qquad \text{ and, respectively,} \qquad   p_{j,2}^{0|k,n}\psi_{n,2} ^{k,j| 0}. 
$$

Since $\lambda_k\mu_k=1$, we can express $\alpha_{kj}$ (or equivalently $b_{mn}$) in terms of the third orders jets of $ p_j$ and $q_j$. We have
\begin{equation}\label{eq:higheralpha}
\begin{aligned} 
\alpha_{jk} &=  \sum_{n=1}^d \left( p_{j,2}^{k,n|0} \phi_{n,2}^{j|k}  + p_{j,2}^{n,j |0} \phi_{n,2}^{k|k} +  p_{j,2}^{k|n}\psi_{n,2}^{j|k} + p_{j,2}^{n|k}\phi_{n,2}^{k,j|0} \right)
\\ &+   \sum_{n=1}^d  (p_{j,2}^{0|n, k} +  p_{j,2}^{0|k, n}     )\psi_{n,2}^{k,j|0}    + p_{j,3}^{k,j|k} \,.
\end{aligned} 
\end{equation}
We recall that the coefficients of the homogeneous polynomials  $\phi_{j,2}$ and $\psi_{j,2}$ are given in terms of the second order jets of $p_j$ and $q_j$  in formulas
\eqref{eq:phi_{j,2}} and \eqref{eq:psi_{j,2}}. 

\subsection{Twist conditions}\label{subsectwist} By following the proof of \cite[Theorem 1.4]{EFK}, one sees that the KAM theorem can be applied when the frequency vector $\omega$ of $Df(0)$ is irrational (in the sense that its coordinates are rationally independent as in \cite{EFK}) and the Birkhoff normal form  $B(r)=2\pi\langle\omega,r\rangle+\sum\frac{1}{2}b_{mn}r_mr_n+O_2(r)$ satisfies the following twist condition: the image of the \emph{frequency map} 
$$(r_1,\dots, r_d) \mapsto (\omega_1+\sum b_{1m}r_m, \dots, \omega_d+\sum b_{dm} r_m)$$ 
is not contained in a hyperplane. 
 \providecommand{\bysame}{\leavevmode\hbox to3em{\hrulefill}\thinspace}
\providecommand{\MR}{\relax\ifhmode\unskip\space\fi MR }
\providecommand{\MRhref}[2]{%
  \href{http://www.ams.org/mathscinet-getitem?mr=#1}{#2}
}
\providecommand{\href}[2]{#2}

\end{document}